\newtheorem{theorem}{Theorem}
\newtheorem{proposition}[theorem]{Proposition}
\newtheorem{lemma}[theorem]{Lemma}
\newtheorem{Lem}[theorem]{Lemma}
\theoremstyle{definition}
\newtheorem{definition}{Definition}
\newtheorem{remark}{Remark}
\newcommand{\leb}{\lambda}
\newcommand{\W}{\Omega}
\newcommand{\R}{{\mathbb R}}
\newcommand{\Z}{{\mathbb Z}}
\newcommand{\ep}{\varepsilon}
\newcommand{\gives}{\ensuremath{\rightarrow}}
\newcommand{\x}{\ensuremath{\times}}
\newcommand{\dist}{\operatorname{dist}_g}
\newcommand{\Dist}{\operatorname{dist}}
\newcommand{\abs}[1]{\ensuremath{\left| #1 \right|}}
\newcommand{\lr}[1]{\ensuremath{\left(#1 \right)}}
\newcommand{\norm}[1]{\left\lVert#1\right\rVert}
\newcommand{\inprod}[2]{\ensuremath{\left\langle#1,#2\right\rangle}}
\newcommand{\twiddle}[1]{\ensuremath{\widetilde{#1}}}
\newcommand{\Union}{\ensuremath{\bigcup}}
\newcommand{\w}{\omega}
\newcommand{\dell}{\ensuremath{\partial}}
\newcommand{\set}[1]{\ensuremath{\{#1\}}}
\DeclareMathOperator{\vol}{vol}
\DeclareMathOperator{\supp}{supp}
\DeclareMathOperator{\inj}{inj}
\title[Scaling Limit for the Spectral Projector]
{Scaling limit for the kernel of the Spectral Projector and remainder estimates in the pointwise Weyl Law}
\author[Y. Canzani]{Yaiza Canzani}
\author[B. Hanin]{Boris Hanin}
\address[Y. Canzani]{ Institute for Advanced Study and  Harvard University.\medskip}
 \email{canzani@math.ias.edu}
\address[B. Hanin]{ Massachusetts Institute of Technology.\medskip}
\email{bhanin@mit.edu}
\thanks{Y.C. was partially supported by an NSERC Postdoctoral Fellowship and by NSF grant DMS-1128155.  B.H. was partially supported by NSF grant  DMS-1400822.}
\begin{document}
\maketitle


\begin{abstract}
Let $(M,g)$ be a compact smooth Riemannian manifold. We obtain new off-diagonal estimates as $\leb\gives \infty$ for the remainder in the pointwise Weyl Law for the kernel of the spectral projector of the Laplacian onto functions with frequency at most $\leb$. A corollary is that, when rescaled around a non self-focal point, the kernel of the spectral projector onto the frequency interval $(\leb,\leb+1]$ has a universal scaling limit as $\leb\gives \infty$ (depending only on the dimension of $M$). Our results also imply that if $M$ has no conjuage points, then immersions of $M$ into Euclidean space by an orthonormal basis of eigenfunctions with frequencies in $(\leb,\leb+1]$ are embeddings for all $\leb$ sufficiently large. 

\end{abstract}

\section{Introduction}
Suppose that $(M,g)$ is a smooth, compact, Riemannian manifold without boundary of dimension $n\geq 2.$ Let $\Delta_g$ be the non-negative Laplacian acting on $L^2(M,g,\R),$ and let $\set{\varphi_j}_j$ be an orthonormal basis of eigenfunctions: 
\begin{equation}
\Delta_g\varphi_j = \leb_j^2\, \varphi_j,\label{E:Efns}
\end{equation}
with $0=\leb_0^2< \leb_1^2\leq \leb_2^2\leq \cdots$. This article concerns the $\leb \gives \infty$ asymptotics of the Schwartz kernel 
\begin{equation}
E_\leb(x,y)=\sum_{\leb_j\leq \leb}\varphi_j(x)\varphi_j(y)\label{E:Big Ker Def}
\end{equation}
of the spectral projection
\[E_\leb:~~L^2(M,g)\rightarrow \bigoplus_{\mu\in (0, \leb]} \ker\lr{\Delta_g - \mu^2}\]
onto functions with frequency at most $\leb.$ We are primarily concerned with the behavior of $E_\leb(x,y)$ at points $x,y \in M$ for which the Riemannian distance $\dist(x,y)$ is less than the injectivity radius $\inj(M,g)$ so that the inverse of the exponential map $\exp_y^{-1}(x)$ is well-defined. We write
\begin{equation}
E_\leb\lr{x,y}=\frac{\leb^n}{(2\pi)^n}\int_{|\xi|_{g_y}<1} e^{i\leb\langle{\exp_y^{-1}(x)},{\xi}\rangle_{g_y}}\,\frac{d\xi}{\sqrt{\abs{g_y}}} \, +\,R(x,y,\leb),\label{E:LWL}
\end{equation}
where the remainder $R(x,y,\leb)$ is a smooth function of $x,y$. The integral in \eqref{E:LWL} is over the cotangent fiber $T_y^*M,$ and it is coordinate independent because the integration measure $d\xi /\sqrt{\abs{g_y}}$ is the quotient of the natural symplectic form $d\xi dy$ on $T^*M$ by the Riemannian volume form $\sqrt{\abs{g_y}}dy.$ The integral is also symmetric in $x$ and $y$, which can be seen by changing variables from $T_y^*M$ to $T_x^*M$ using the parallel transport operator (cf \eqref{E:Parallel Transport}).

Our main result, Theorem \ref{close to diagonal}, fits into a long history of estimates on $R(x,y,\leb)$ as $\leb \to +\infty$ (cf \S \ref{S: discussion Thm 1} for some background). To state it, we need a definition from \cite{Saf, SZII}. 

\begin{definition}[Non self-focal point] \label{D:aperiodic}
A point $x \in M$ is said to be \emph{non self-focal} if the set of unit covectors 
\begin{equation}
\mathcal L_x=\{\xi \in S_x^*M| \;\;\exists \,t>0 \;\;\; \text{with}\;\; \;\exp_x\lr{t\xi}=x\}\label{E:Loopset Def}
\end{equation}
has zero measure with respect to the surface measure induced by $g$ on $S_x^*M$. 
\end{definition}
\begin{theorem}\label{close to diagonal}
Let $(M,g)$ be a compact, smooth, Riemannian manifold of dimension $n \geq 2$, with no boundary. Suppose $x_0\in M$ is a non self-focal point, and let $r_\leb$ be a non-negative function with $\lim_{\leb \to \infty} r_\leb=0$. Then, 
  \begin{equation} \label{E:Near-Diag Rem Est}
    \sup_{x,y\in B(x_0, r_\leb)} \abs{R(x,y,\leb)}=o(\leb^{n-1}),
\end{equation}
as $\leb \to \infty.$ Here,  $B(x_0,r_\leb)$ denotes the geodesic ball of radius $r_\leb$ centered at $x_0$, and the rate of convergence depends on $x_0$ and $r_\leb$. 
\end{theorem}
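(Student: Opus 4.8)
The plan is to represent $R(x,y,\leb)$ via a Fourier Tauberian argument and then control the resulting oscillatory integral using the non self-focal hypothesis at $x_0$, uniformly over the shrinking balls $B(x_0,r_\leb)$.

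First I would recall the standard construction: using a short-time parametrix (Hadamard / Hörmander) for the wave group $\cos(t\sqrt{\Delta_g})$, one writes $dE_\leb(x,y)$ as a sum of an explicit leading term — whose integral against $\widehat{\rho}$ for a suitable $\rho$ recovers exactly the Euclidean-type integral in \eqref{E:LWL} — plus an error governed by the long-time behavior of the wave propagator near the diagonal. Concretely, choosing $\rho \in \mathcal S(\R)$ with $\widehat\rho$ supported near $0$ and $\rho(0)=1$, the smoothed remainder $(\rho * dE_\leb)(x,y)$ is $O(\leb^{n-1})$ with the precise constant tied to the wave trace, and the passage from the smoothed to the sharp remainder costs a factor depending on the density of eigenvalues, which on the diagonal is $O(\leb^{n-1})$. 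The key quantitative point is that the coefficient of $\leb^{n-1}$ in the sharp remainder is, up to harmless factors, the measure of the set of unit covectors at (points near) $x_0$ that return close to the base point in bounded time — this is where Definition \ref{D:aperiodic} enters.

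The second step is to make this uniform over $x,y \in B(x_0,r_\leb)$. Here I would use that the loopset $\mathcal L_{x_0}$ having zero measure is an \emph{upper-semicontinuous} type condition: for every $\delta>0$ there is a neighborhood $U$ of $x_0$ and a time $T$ such that the set of $(z,\xi) \in S^*M$ with $z \in U$ and $\Dist(\exp_z(t\xi), z) < \diam(U)$ for some $t \in [\,\text{inj}/2, T\,]$ has measure $< \delta$. Combined with a propagation-of-singularities / stationary phase estimate for the wave kernel $\cos(t\sqrt{\Delta_g})(x,y)$ with $|t|$ bounded and $x,y$ close, this forces the relevant oscillatory integral to be $o(\leb^{n-1})$ once $r_\leb \to 0$; the rate at which one may take $r_\leb \to 0$ is dictated by how fast the measure of the near-loopset shrinks as $U \downarrow \{x_0\}$, which explains the dependence of the rate on $x_0$ and $r_\leb$ in the statement.

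The third step is simply to assemble: choose a time cutoff $T = T_\leb \to \infty$ slowly, split $R = R_{\mathrm{short}} + R_{\mathrm{long}}$ accordingly, bound $R_{\mathrm{short}}$ by the near-loopset measure estimate above (giving $o(\leb^{n-1})$ uniformly on $B(x_0,r_\leb)$ after optimizing $T_\leb$ against $r_\leb$), and bound $R_{\mathrm{long}}$ by the trivial $L^\infty$ bound $E_\leb(x,x) = O(\leb^n)$ together with the rapid decay of $\widehat\rho$, which contributes $O(\leb^n / T_\leb^N) = o(\leb^{n-1})$ for $N$ large. I expect the main obstacle to be the uniformity in step two: one must show that the "bad" directions really do have small measure on a whole neighborhood (not just at $x_0$), and that the stationary phase bounds for the wave kernel degrade in a controlled way as $x,y$ separate within $B(x_0,r_\leb)$ — this is the heart of the argument and is presumably where the careful estimates of the paper are spent.
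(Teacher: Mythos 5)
Your overall architecture (wave-kernel method, short-time parametrix giving the explicit leading term, non self-focal condition controlling the ``bad'' returning directions, with uniformity over a shrinking ball obtained from upper semicontinuity of the loop time) matches the paper's, and your steps one and two are essentially Proposition \ref{P:Smoothed Est 1} and Lemma \ref{P:Microlocal Identity}. But your step three -- the passage from the smoothed projector $\rho*E_\leb$ to the sharp projector $E_\leb$ -- contains a genuine gap, and it is precisely the step where the work of the paper is concentrated. You propose to split $R=R_{\mathrm{short}}+R_{\mathrm{long}}$ by a time cutoff $T_\leb$ and to bound $R_{\mathrm{long}}$ by $O(\leb^n/T_\leb^N)$ using ``rapid decay of $\widehat\rho$.'' This does not work: the sharp projector $E_\leb$ is a discontinuous function of $\leb$ and admits no such time decomposition, and the quantity $E_\leb-\rho*E_\leb$ is controlled by the local density of eigenvalues in a unit window, which is generically of size $\leb^{n-1}$ -- exactly the order you need to beat. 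You acknowledge this (``costs a factor ... which is $O(\leb^{n-1})$'') but never supply the mechanism that improves $O$ to $o$. The paper's mechanism is the microlocal partition $\psi_\ep^2=B_\ep+C_\ep$ of Lemma \ref{P:Microlocal Identity}: for the $C_\ep$-piece, $U(t)C_\ep^*$ is smoothing for $\tfrac12\inj(M,g)<|t|<1/\ep$, so the Tauberian theorem for monotone functions (Lemma \ref{L: FTT 1}) with window $a=\ep$ yields a gain of a factor $\ep$; for the $B_\ep$-piece, the symbol has support of measure $O(\ep)$, so every term it contributes is already $O(\ep\leb^{n-1})$ and is handled by the Tauberian theorem for non-monotone functions (Lemma \ref{L: FTT 2}). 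Nothing in your proposal plays the role of this decomposition.

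A second, related omission: for $x\neq y$ the function $\leb\mapsto E_\leb(x,y)$ is not monotone, so H\"ormander's Tauberian theorem cannot be applied to it directly. This is the principal new difficulty of the off-diagonal case relative to the known on-diagonal results of Safarov and Sogge--Zelditch, and the paper resolves it by introducing the auxiliary monotone function $\alpha_\ep(x,y,\leb)=EC_\ep^*(x,y,\leb)+\tfrac12\lr{E(x,x,\leb)+C_\ep EC_\ep^*(y,y,\leb)}=\tfrac12\sum_{\leb_j\leq\leb}[\varphi_j(x)+C_\ep\varphi_j(y)]^2$ in \eqref{alpha}. Your proposal treats the off-diagonal remainder as if the on-diagonal Tauberian argument applied verbatim, which it does not. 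Finally, a smaller but real point: to get the leading term with amplitude identically $1$ and phase $\langle\exp_y^{-1}(x),\xi\rangle_{g_y}$, one cannot use an arbitrary adapted phase (that would reintroduce an $O(\leb^{n-1})$ ambiguity); the paper builds the parametrix with the specific phase \eqref{E: phi}, shows the subleading amplitude vanishes at $t=0$ (Proposition \ref{P:Parametrix}), and absorbs the $\Theta(x,y)^{-1/2}$ half-density factor into an $O(\dist(x,y)\leb^{n-1})$ error by an integration by parts. These are the details your first step waves away.
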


The little oh estimate \eqref{E:Near-Diag Rem Est} is not new for $x=y$ (i.e. $r_\leb=0$). Both Safarov in \cite{Saf} and Sogge-Zelditch in \cite{SZ} show that $R(x,x,\leb)=o(\leb^{n-1})$ when $x$ belongs to a compact subset of the diagonal in $M\times M$ consisting only of non self-focal points (see also \cite{SV}). Safarov in \cite{Saf} also obtained $o(\leb^{n-1})$ estimates on $R(x,y,\leb)$ for $(x,y)$ in a compact subset of $M \times M$ that does not intersect the diagonal (under the assumptions of Theorem \ref{T:LWL aperiodic}). Theorem \ref{close to diagonal} simultaneously allows $x\neq y$ and $\dist(x,y)\gives 0$ as $\leb\gives \infty$, closing the gap between the two already known regimes. We refer the reader to \S \ref{S: discussion Thm 1} for further discussion and motivation for Theorem \ref{close to diagonal} and to \S \ref{S: outline main thm} for an outline of the proof.

An elementary corollary of Theorem \ref{close to diagonal} is Theorem \ref{T:Main}, which gives scaling asymptotics for the Schwartz kernel 
\begin{equation}
E_{_{(\leb,\leb+1]}}(x,y):=\sum_{\leb< \leb_j\leq \leb+1}\varphi_j(x)\varphi_j(y) \label{E: def of E}
\end{equation}
of the orthogonal projection 
\[E_{_{(\leb,\leb+1]}}=E_{\leb+1}-E_\leb:~~ L^2(M,g)\rightarrow \bigoplus_{\mu\in (\leb, \leb+1]} \ker\lr{\Delta_g - \mu^2}.\] 
Passing to polar coordinates in \eqref{E:LWL} and using that 
\begin{equation}\label{E:Fourier-Bessel}
\int_{S^{n-1}} e^{i\langle v, \w\rangle} d\w= (2\pi)^{n/2} \frac{J_{\frac{n-2}{2}}(|v|)}{|v|^{\frac{n-2}{2}}},
\end{equation}
it is straight forward to obtain the following result.
\begin{theorem}\label{T:Main}
Let $(M,g)$ be a compact, smooth, Riemannian manifold of dimension $n\geq 2$, with no boundary.  Let $x_0\in M$ be a non self-focal point. 
Consider any non-negative function $r_\leb$ satisfying $r_\leb \gives 0$ as $\leb \gives \infty.$  Then,
\begin{equation}\label{E:Scaling Limit 2}
  \sup_{\substack{x,y\in B(x_0, r_\leb)}}\abs{E_{_{(\leb,\leb+1]}}\left(x,y\right)-\frac{\leb^{n-1}}{(2\pi)^{\frac{n}{2}}} \,\frac{J_{\frac{n-2}{2}}\left(\leb \dist(x,y)\right)}{(\leb \dist(x,y))^{\frac{n-2}{2}}}}=o(\leb^{n-1}),
 \end{equation}
 where $J_\nu$ is the Bessel function of the first kind with index $\nu,$ $B(x_0, r_\leb)$ denotes the geodesic ball of radius $r_\leb$ centered at $x_0,$ and $\dist$ is the Riemannian distance. 
\end{theorem}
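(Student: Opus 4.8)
The plan is to reduce \eqref{E:Scaling Limit 2} to the already-available on-diagonal information in \eqref{E:LWL} together with Theorem~\ref{close to diagonal}, plus an exact evaluation of the Euclidean principal term. First I would write $E_{_{(\leb,\leb+1]}}=E_{\leb+1}-E_\leb$ and subtract two instances of \eqref{E:LWL}, obtaining
\[
E_{_{(\leb,\leb+1]}}(x,y)=\big[I_{\leb+1}(x,y)-I_\leb(x,y)\big]+\big[R(x,y,\leb+1)-R(x,y,\leb)\big],
\]
where $I_s(x,y):=\frac{s^n}{(2\pi)^n}\int_{\abs{\xi}_{g_y}<1}e^{is\inprod{\exp_y^{-1}(x)}{\xi}_{g_y}}\,\frac{d\xi}{\sqrt{\abs{g_y}}}$ is the principal term. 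Since $r_\leb\to0$, Theorem~\ref{close to diagonal} gives $\sup_{x,y\in B(x_0,r_\leb)}\abs{R(x,y,\leb)}=o(\leb^{n-1})$ directly, and also $\sup_{x,y\in B(x_0,r_\leb)}\abs{R(x,y,\leb+1)}=o(\leb^{n-1})$ after applying it at frequency $\leb+1$ with the admissible radius function $\mu\mapsto r_{\mu-1}$. Hence it suffices to control $I_{\leb+1}-I_\leb$ uniformly for $x,y\in B(x_0,r_\leb)$.

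Next I would evaluate $I_s$ exactly. Fixing $y$ and a $g_y$-orthonormal frame of $T_y^*M$, one has $I_s(x,y)=\frac{1}{(2\pi)^n}H(s)$, where $H(s):=\int_{\abs{\eta}<s}e^{i\inprod{v}{\eta}}\,d\eta$ and $v$ is the coordinate vector of $\exp_y^{-1}(x)$, so that $\abs{v}=\dist(x,y)=:\rho$ by the Gauss lemma (valid once $r_\leb<\inj(M,g)$). By the coarea formula and \eqref{E:Fourier-Bessel},
\[
H'(s)=\int_{\abs{\eta}=s}e^{i\inprod{v}{\eta}}\,dS(\eta)=s^{n-1}\int_{S^{n-1}}e^{is\inprod{v}{\w}}\,d\w=(2\pi)^{n/2}\,s^{n-1}\,\frac{J_{\frac{n-2}{2}}(s\rho)}{(s\rho)^{\frac{n-2}{2}}},
\]
so that $\frac{1}{(2\pi)^n}H'(\leb)$ is exactly the profile in \eqref{E:Scaling Limit 2}. (Integrating once more, via $\frac{d}{dz}[z^\nu J_\nu(z)]=z^\nu J_{\nu-1}(z)$, recovers the familiar closed form $I_\leb(x,y)=\frac{\leb^n}{(2\pi)^{n/2}}\frac{J_{n/2}(\leb\rho)}{(\leb\rho)^{n/2}}$, but only $H'$ is needed below.) It therefore remains to bound
\[
I_{\leb+1}(x,y)-I_\leb(x,y)-\tfrac{1}{(2\pi)^n}H'(\leb)=\tfrac{1}{(2\pi)^n}\int_\leb^{\leb+1}\!\!\int_\leb^t H''(s)\,ds\,dt,
\]
whose modulus is at most $\tfrac{1}{(2\pi)^n}\sup_{s\in[\leb,\leb+1]}\abs{H''(s)}$.

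Finally I would differentiate $H'$ once more, using $\frac{d}{dz}[z^{-\nu}J_\nu(z)]=-z^{-\nu}J_{\nu+1}(z)$, to get
\[
H''(s)=(2\pi)^{n/2}\Big[(n-1)\,s^{n-2}\,\frac{J_{\frac{n-2}{2}}(s\rho)}{(s\rho)^{\frac{n-2}{2}}}-s^n\rho^2\,\frac{J_{n/2}(s\rho)}{(s\rho)^{n/2}}\Big],
\]
and then estimate each term. The profiles $z\mapsto J_{\frac{n-2}{2}}(z)z^{-\frac{n-2}{2}}$ and $z\mapsto J_{n/2}(z)z^{-n/2}$ are bounded on $[0,\infty)$, and the large-argument asymptotics of $J_\nu$ sharpen this to $O((1+z)^{-\frac{n-1}{2}})$ and $O((1+z)^{-\frac{n+1}{2}})$ respectively. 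Inserting these with $s\in[\leb,\leb+1]$ and $\rho\le 2r_\leb$, the first term of $H''$ is $O(\leb^{n-2})$ uniformly in $\rho$, while the second is $O\big(\leb^{n-2}(\leb\rho)^2(1+\leb\rho)^{-\frac{n+1}{2}}\big)$, which equals $O(\leb^{n-2})$ when $n\ge3$ and $O\big(1+(\leb r_\leb)^{1/2}\big)=o(\leb)$ when $n=2$ (here one uses $r_\leb\to0$). In every case $\sup_{s\in[\leb,\leb+1]}\abs{H''(s)}=o(\leb^{n-1})$ uniformly in $x,y\in B(x_0,r_\leb)$, which combined with the previous two paragraphs yields \eqref{E:Scaling Limit 2}.

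I expect the only genuine obstacle to be this last uniform bound: since $\rho=\dist(x,y)$ sweeps the entire interval $[0,2r_\leb]$, $H''$ must be controlled uniformly through the crossover between the regime $\leb\rho\lesssim1$, where the two Bessel profiles sit near their values at the origin, and the regime $\leb\rho\gtrsim1$, where they oscillate and decay. This is precisely why the decay rates of $J_\nu$, rather than mere boundedness, are needed, and it is mildly delicate when $n=2$; everything else is a routine change of variables together with the standard Bessel recurrences.
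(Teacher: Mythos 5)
Your argument is correct and is essentially the paper's own proof, which it compresses into one line: write $E_{_{(\leb,\leb+1]}}=E_{\leb+1}-E_\leb$, control both remainders via Theorem \ref{close to diagonal}, and evaluate the difference of principal terms by passing to polar coordinates and invoking \eqref{E:Fourier-Bessel}. Your careful treatment of $\int_\leb^{\leb+1}H'(s)\,ds-H'(\leb)$ via the Bessel recurrences and decay rates (including the $n=2$ crossover) is exactly the ``straightforward'' computation the paper omits, and it checks out.
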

\begin{remark}
Under the assumptions of Theorem \ref{T:Main}, relation \eqref{E:Scaling Limit 2} holds for $E_{(\leb,\leb+\delta]}$ with any $\delta>0.$ The difference is that the Bessel function term is multiplied by $\delta$ and that the rate of convergence depends on $\delta.$ Our proof of Theorem \ref{T:Main} is insensitive to the choice of $\delta$.
\end{remark}
In normal coordinates at $x_0,$ \eqref{E:Scaling Limit 2} therefore implies
\begin{equation}\label{E:Scaling Limit 0}
 \sup_{\substack{\abs{u},\abs{v}<r_0}}\abs{E_{_{(\leb,\leb+1]}}\left(x_0+\frac{u}{\leb},x_0+\frac{v}{\leb}\right)- \frac{\leb^{n-1}}{\lr{2\pi}^n}\int_{S^{n-1}}e^{i\inprod{u-v}{w}}d\w}=o(\leb^{n-1})
 \end{equation}
as $\leb\gives \infty$. The measure $d\w$ is the Euclidean surface measure on the unit sphere $S^{n-1},$ and the rate of convergence of the error term depends on $r_0$ and the point $x_0.$ The integral over $S^{n-1}$ in \eqref{E:Scaling Limit 0} is the kernel of the spectral projector onto the generalized eigenspace of eigenvalue $1$ for the flat Laplacian on $\R^n$ (cf \cite{Hel} and \S 2.1 in \cite{Zel4}).  
 
We believe the estimate \eqref{E:Near-Diag Rem Est} holds for any number of covariant derivatives $\nabla_x^j\nabla_y^k$ of the remainder $R(x,y,\leb)$ with $o(\leb^{n-1})$ replaced by $o(\leb^{n-1+j+k}).$ This would immediately imply that the $C^0$ converence in \eqref{E:Scaling Limit 2} can be upgraded to $C^k$ converence for all $k.$ Proving this is work in progress by the authors. Since $E_{(\leb, \leb+1]}$ is the covariance kernel for asymptotically fixed frequency random waves on $M$ (cf. \cite{SW, NS, Zel2}), this $C^\infty$ converence would show that the integral statistics of monochromatic random waves near a non self-focal point depend only on the dimension of $M$. We refer the reader to \S \ref{S: discussion Thm 2} for further discussion and motivation for Theorem \ref{T:Main}.


\subsection{Applications}
Combining Theorem \ref{close to diagonal} with prior results of Safarov in \cite{Saf}, we obtain little oh estimates on $R(x,y,\leb)$ without requiring $x,y$ to be in a shrinking neighborhood of a single non-focal point. We recall the following definition from \cite{Saf, SZII}.
 \begin{definition}[Mutually non-focal points]\label{D:non-focal}
   Let $(M,g)$ be a Riemannian manifold. We say that $x,y\in M$ are \emph{mutually non-focal} if the set of unit covectors 
\begin{equation}
\mathcal L(x,y)=\{\xi \in S_x^*M| \;\;\exists \,t>0 \;\;\; \text{with}\;\; \;\exp_x\lr{t\xi}=y\}\label{E:Loopset Def}
\end{equation}
has zero measure with respect to the Euclidean surface measure induced by $g$ on $S_x^*M$. 
 \end{definition} 

\begin{theorem}\label{T:LWL aperiodic}
Let $(M,g)$ be a compact, smooth, Riemannian manifold of dimension $n\geq 2$, with no boundary. Consider any compact set $K\subseteq M\x M$ such that if $(x,y)\in K,$ then
$x,y$ are mutually non-focal and either $x$ or $y$ is a non self-focal point.
Then, as $\leb \gives \infty,$ we have
  \begin{equation}\label{E:Global Rem Est}
  \sup_{\lr{x,y}\in K} \abs{R(x,y,\leb)}=o(\leb^{n-1}).
\end{equation}
\end{theorem}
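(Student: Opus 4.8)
The plan is to reduce the global statement to the local one already established in Theorem \ref{close to diagonal} by a compactness and partition argument, patching in Safarov's off-diagonal estimate from \cite{Saf} for the pairs that stay a definite distance apart. First I would split the compact set $K$ into two pieces: $K_{\text{far}} = \{(x,y)\in K : \dist(x,y)\geq \delta_0\}$ and $K_{\text{near}} = \{(x,y)\in K : \dist(x,y)\leq \delta_0\}$ for a small $\delta_0 < \inj(M,g)$ to be chosen. On $K_{\text{far}}$ the hypotheses (mutual non-focality, and that one of the two points is non self-focal) are exactly those under which Safarov \cite{Saf} proves $\sup_{(x,y)\in K_{\text{far}}}\abs{R(x,y,\leb)} = o(\leb^{n-1})$; here one uses that $K_{\text{far}}$ is compact and disjoint from the diagonal, so this case is cited rather than reproved.

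The substance is the region $K_{\text{near}}$, i.e.\ pairs that are close together, which includes a neighborhood of the diagonal portion of $K$. The key observation is that if $(x,y)\in K$ with $x$ (say) non self-focal and $\dist(x,y)$ small, then $x$ sits in a small geodesic ball to which Theorem \ref{close to diagonal} applies. So I would argue by contradiction: suppose \eqref{E:Global Rem Est} fails, so there is $\ep>0$ and a sequence $\leb_k\to\infty$ together with points $(x_k,y_k)\in K_{\text{near}}$ with $\abs{R(x_k,y_k,\leb_k)}\geq \ep\,\leb_k^{n-1}$. By compactness of $K$, pass to a subsequence so that $(x_k,y_k)\to(x_\infty,y_\infty)\in K$; since $\dist(x_k,y_k)\leq\delta_0$ is small and we may shrink $\delta_0$, continuity and the hypothesis give that $x_\infty=y_\infty$ lies on the diagonal (for $\delta_0$ small enough a convergent sequence in $K_{\text{near}}$ forces the limit onto the diagonal, because if the limit were off-diagonal we would instead be in the $K_{\text{far}}$ regime for large $k$). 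The hypothesis on $K$ then guarantees $x_\infty$ is a non self-focal point. Now set $r_{\leb_k} := \max\{\dist(x_k,x_\infty),\dist(y_k,x_\infty)\}$, which tends to $0$, and observe that for all large $k$ both $x_k$ and $y_k$ lie in $B(x_\infty, r_{\leb_k})$. Applying Theorem \ref{close to diagonal} with $x_0 = x_\infty$ yields $\sup_{x,y\in B(x_\infty,r_{\leb_k})}\abs{R(x,y,\leb_k)} = o(\leb_k^{n-1})$, contradicting $\abs{R(x_k,y_k,\leb_k)}\geq\ep\,\leb_k^{n-1}$.

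One technical point to handle carefully: the function $r_\leb$ in Theorem \ref{close to diagonal} is a function of the continuous parameter $\leb$, whereas here I only control a subsequence $\leb_k$; this is cosmetic, since I may extend $r_{\leb_k}$ to all $\leb$ by any non-negative function tending to $0$ that agrees with the given values at $\leb_k$ (for instance interpolating monotonically), and Theorem \ref{close to diagonal} then gives the conclusion along the subsequence, which is all that is needed for the contradiction. A second point is that the center $x_\infty$ of the ball is being chosen along the subsequence, i.e.\ it depends on the subsequence; but since we only need one contradiction this is fine. The main obstacle, and the only place real analysis enters, is Safarov's off-diagonal estimate invoked on $K_{\text{far}}$; everything else is the compactness dichotomy that splits "close pairs, handled by Theorem \ref{close to diagonal}" from "far pairs, handled by \cite{Saf}," together with the verification that near a limit point on the diagonal the non self-focal hypothesis is inherited so that Theorem \ref{close to diagonal} is genuinely applicable.
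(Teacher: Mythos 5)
Your overall strategy is the same as the paper's: treat pairs near the diagonal with the local result at non self-focal points and pairs a definite distance apart with Safarov's off-diagonal estimate, glued together by compactness of $K$. However, one step in your contradiction argument is false as written: the claim that, for $\delta_0$ small, a convergent sequence $(x_k,y_k)$ in $K_{\text{near}}$ must have its limit on the diagonal ``because otherwise we would be in the $K_{\text{far}}$ regime for large $k$.'' The threshold $\delta_0$ is fixed before the bad sequence is produced, and a sequence with $\dist(x_k,y_k)\to\delta_0/2$ stays in $K_{\text{near}}$ for all $k$ while its limit is off-diagonal; shrinking $\delta_0$ afterwards does not help, since the bad sequence itself depends on the chosen decomposition. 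The correct dichotomy is on the limit point, not on membership in $K_{\text{near}}$ versus $K_{\text{far}}$: after extracting $(x_k,y_k)\to(x_\infty,y_\infty)\in K$, either $x_\infty=y_\infty$, in which case $(x_\infty,x_\infty)\in K$ forces $x_\infty$ to be non self-focal and your argument via Theorem \ref{close to diagonal} applies (with the harmless adjustments you already note concerning $r_\leb$), or $x_\infty\neq y_\infty$, in which case the tail of the sequence lies in the compact set $K\cap\{(x,y):\dist(x,y)\geq \tfrac{1}{2}\dist(x_\infty,y_\infty)\}$, which is disjoint from the diagonal and consists of mutually non-focal pairs, so Safarov's theorem yields the contradiction in that case as well. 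With that repair the proof closes.

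For comparison, the paper avoids the sequential argument entirely: for each $\ep>0$ it covers the diagonal part $K\cap\Delta$ by finitely many product neighborhoods $\mathcal U_\ep^{x_j}\times\mathcal U_\ep^{x_j}$ on which the quantitative local bound $c\,\ep\,\leb^{n-1}+c_\ep\leb^{n-2}$ of \eqref{E:Main Goal bis} holds, and applies Safarov to the compact remainder $K_\ep=K\setminus\bigcup_j\mathcal U_\ep^{x_j}\times\mathcal U_\ep^{x_j}$, which is automatically disjoint from the diagonal. This produces a uniform estimate in one pass and sidesteps the question of where limits of near-diagonal sequences land; your sequential version is equivalent once the case analysis is placed on the limit point.
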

\begin{remark}
Theorem \ref{T:LWL aperiodic} applies with $K=M\x M$ if $(M,g)$ has no conjugate points.
\end{remark}

Theorem \ref{T:LWL aperiodic} $-$ proved in in \S \ref{S:Aperiodic Pf} $-$ can be applied to studying immersions of $(M,g)$ into Euclidean space by arrays of high frequency eigenfunctions. Let $\{\varphi_{j_1},\ldots, \varphi_{j_{m_\leb}}\}$ be an orthonormal basis for $\bigoplus_{\leb<\mu\leq \leb+1} \text{ker}(\Delta_g- \mu^2)$  and consider the maps 
$$\Psi_{_{(\leb, \leb+1]}}:M\gives \R^{m_\leb}, \quad\qquad 
\Psi_{_{(\leb,\leb+1]}}(x)=\sqrt{\frac{(2\pi)^n}{2 \leb^{n-1}}}\lr{\varphi_{j_1}(x),\ldots, \varphi_{j_{m_\leb}}(x)}.$$
 The  $\leb^{-\frac{n-1}{2}}$ normalization is chosen so that the diameter of $\Psi_{_{(\leb,\leb+1]}}(M)$ in $\R^{m_\leb}$ is bounded above and below as $\leb \gives \infty.$ Maps related to $\Psi_\leb$ are studied in \cite{Ber, JMS, Pot, Zel2}. 
In particular, Zelditch in \cite[Proposition 2.3]{Zel2} showed that the maps $\Psi_{_{(\leb,\leb+1]}}$ are almost-isometric immersions for large $\leb$ in the sense that a certain rescaling of the pullback $\Psi_\leb^*(g_{euc})$ of the Euclidean metric on $\R^{m_\leb}$ converges pointwise to $g.$  A consequence of Theorem \ref{T:LWL aperiodic} is that these maps are actually embeddings for $\leb$ sufficiently large.
 
 \begin{theorem}\label{Embedding}
Let $(M,g)$ be a compact, smooth, Riemannian manifold of dimension $n \geq 2$, with no boundary. If every point $x\in M$ is non self-focal and all pairs $x,y\in M$ are mutually non-focal, then there exists $\leb_0>0$ so that the maps $\Psi_{_{(\leb,\leb+1]}}: M \to \mathbb R^{m_\leb}$ are embeddings for all $\leb \geq \leb_0$.
\end{theorem}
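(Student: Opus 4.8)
The plan is to prove that $\Psi_\leb:=\Psi_{_{(\leb,\leb+1]}}$ is an injective immersion for all large $\leb$; since $M$ is compact this automatically upgrades to an embedding. The immersion half is essentially known: because every point of $M$ is non self-focal, the diagonal estimates of Safarov \cite{Saf} and Sogge--Zelditch \cite{SZ} (cf.\ also Theorem~\ref{close to diagonal} with $r_\leb\equiv0$) hold \emph{uniformly} on $M$ for $R(x,x,\leb)$, and by the same method (cf.\ \cite[Proposition~2.3]{Zel2}) $\nabla_x\nabla_y R(x,y,\leb)\big|_{x=y}=o(\leb^{n+1})$ uniformly on $M$; consequently $\leb^{-2}\Psi_\leb^*(g_{euc})\to g$ uniformly on $M$, so there are $c_1>0$ and $\leb_0$ with $\Psi_\leb^*(g_{euc})\ge c_1\leb^2\,g$ for $\leb\ge\leb_0$, and in particular $d\Psi_\leb$ is everywhere injective. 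From H\"ormander's (hypothesis-free) local Weyl law, differentiated twice in $x$ and twice in $y$ on the diagonal, one also records the uniform bound $\norm{D^2\Psi_\leb}_{euc}=O(\leb^2)$ on $M$, where $D^2$ denotes the $\R^{m_\leb}$-valued Hessian. It therefore remains to prove that $\Psi_\leb$ is injective for all large $\leb$.

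For this I would argue by contradiction. Suppose $\leb_k\to\infty$ and $x_k\ne y_k$ satisfy $\Psi_{\leb_k}(x_k)=\Psi_{\leb_k}(y_k)$, and set
\[
F_\leb(x,y):=\tfrac{(2\pi)^n}{2\leb^{n-1}}\abs{\Psi_\leb(x)-\Psi_\leb(y)}^2=\tfrac{(2\pi)^n}{2\leb^{n-1}}\big(E_{_{(\leb,\leb+1]}}(x,x)+E_{_{(\leb,\leb+1]}}(y,y)-2\,E_{_{(\leb,\leb+1]}}(x,y)\big),
\]
so $F_{\leb_k}(x_k,y_k)=0$ for all $k$. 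Passing to a subsequence, $x_k\to x_\infty$, $y_k\to y_\infty$, and $\leb_k\dist(x_k,y_k)\to c\in[0,+\infty]$. Let $\vartheta(t):=(2\pi)^{n/2}t^{-(n-2)/2}J_{(n-2)/2}(t)$ for $t>0$, extended by $\vartheta(0):=\vol(S^{n-1})$; by \eqref{E:Fourier-Bessel}, $\vartheta(\abs{z})=\int_{S^{n-1}}e^{i\inprod{z}{\w}}d\w$ is the (real, radial) Fourier transform of surface measure on $S^{n-1}$, so $\vartheta(t)<\vartheta(0)$ \emph{strictly} for every $t>0$ (for $z\ne0$ the function $e^{i\inprod{z}{\w}}$ is not a.e.\ equal to $1$ on $S^{n-1}$) and $\vartheta(t)\to0$ as $t\to\infty$. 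Since $x_\infty$ and $y_\infty$ are non self-focal, Theorem~\ref{T:Main} applied in suitable shrinking balls about $x_\infty$ and about $y_\infty$ gives $\tfrac{(2\pi)^n}{2\leb_k^{n-1}}E_{_{(\leb_k,\leb_k+1]}}(x_k,x_k)\to\tfrac12\vartheta(0)$, and likewise at $y_k$.

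The argument now splits. If $x_\infty\ne y_\infty$, then $\dist(x_k,y_k)$ stays bounded below, and by Safarov's off-diagonal estimates \cite{Saf} (cf.\ Theorem~\ref{T:LWL aperiodic}), together with stationary phase applied to the oscillatory integral in \eqref{E:LWL} when $\dist(x,y)<\inj(M,g)$, one has $E_{_{(\leb,\leb+1]}}(x,y)=o(\leb^{n-1})$ uniformly on $\{\dist(x,y)\ge\delta\}$ for each $\delta>0$; hence $F_{\leb_k}(x_k,y_k)\to\vartheta(0)=\vol(S^{n-1})>0$, contradicting $F_{\leb_k}(x_k,y_k)=0$. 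If instead $x_\infty=y_\infty=:x_0$, then $\dist(x_k,y_k)\to0$, so one may pick a non-negative $r_\leb\to0$ with $x_k,y_k\in B(x_0,r_{\leb_k})$ for all large $k$, and Theorem~\ref{T:Main} then gives $F_{\leb_k}(x_k,y_k)=\vartheta(0)-\vartheta\big(\leb_k\dist(x_k,y_k)\big)+o(1)$. If $c>0$ (including $c=+\infty$), this tends to $\vartheta(0)-\vartheta(c)>0$ by the strict maximality of $\vartheta$ at $0$, a contradiction. The one remaining case is $c=0$, the sub-wavelength regime, where I would use the immersion bounds instead: with $\gamma_k$ a unit-speed minimizing geodesic from $x_k$ to $y_k$ and $d_k:=\dist(x_k,y_k)$, the relation $\tfrac{d^2}{dt^2}(\Psi_{\leb_k}\circ\gamma_k)=D^2\Psi_{\leb_k}(\dot\gamma_k,\dot\gamma_k)$ and Taylor's theorem give $\abs{\Psi_{\leb_k}(y_k)-\Psi_{\leb_k}(x_k)}_{euc}\ge d_k\,\abs{d\Psi_{\leb_k}(\dot\gamma_k(0))}_{euc}-C\leb_k^2 d_k^2\ge\leb_k d_k(\sqrt{c_1}-C\leb_k d_k)$, which is positive for $k$ large since $\leb_k d_k\to0$ — again contradicting $\Psi_{\leb_k}(x_k)=\Psi_{\leb_k}(y_k)$. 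As every case is impossible, $\Psi_\leb$ is injective for all large $\leb$, which together with the immersion property completes the proof of Theorem~\ref{Embedding}.

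I expect the sub-wavelength case $c=0$ to be the main obstacle: Theorems~\ref{T:Main} and \ref{T:LWL aperiodic} are $C^0$ statements about $E_{_{(\leb,\leb+1]}}$, enough to separate points at distance $\gg\leb^{-1}$ but carrying no information about points closer than a wavelength, so there one genuinely needs derivative data — the uniform lower bound $\Psi_\leb^*(g_{euc})\ge c_1\leb^2 g$ and the uniform bound $\norm{D^2\Psi_\leb}_{euc}=O(\leb^2)$ — and some care is required to make these uniform over all of $M$ (this is where the hypothesis that \emph{every} point is non self-focal enters, beyond the pointwise content of Theorems~\ref{close to diagonal}, \ref{T:Main} and \ref{T:LWL aperiodic}). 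A secondary technical point is treating pairs with $\dist(x,y)\ge\inj(M,g)$ in the first case, where \eqref{E:LWL} is unavailable and Safarov's off-diagonal estimates must be invoked directly.
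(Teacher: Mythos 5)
Your proposal is correct and follows essentially the same route as the paper: both split into the regimes $\leb\dist(x,y)\gtrsim 1$ (handled by the Bessel asymptotics of Theorems \ref{T:Main}/\ref{T:LWL aperiodic} and the strict inequality $\int_{S^{n-1}}e^{i\langle z,\w\rangle}d\w<\vol(S^{n-1})$ for $z\neq0$) and the sub-wavelength regime $\leb\dist(x,y)\ll1$ (handled by the Hessian asymptotics of \cite[Proposition 2.3]{Zel2} plus a one-higher-derivative bound and Taylor's theorem). The only differences are cosmetic: you argue by compactness and contradiction where the paper proves the uniform lower bounds \eqref{E:NTS Embedding 2}--\eqref{E:NTS Embedding 1} directly, and in the sub-wavelength case you Taylor-expand $\Psi_\leb\circ\gamma$ using $\norm{D^2\Psi_\leb}=O(\leb^2)$ where the paper Taylor-expands $\Dist_\leb^2$ using the $O(m_\leb\leb^3)$ bound on third derivatives of $E_{(\leb,\leb+1]}$ from \cite{Bin}.
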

\noindent We prove Theorem \ref{Embedding} in \S \ref{S:Pf Embedding}. Note that this result does not hold on the round spheres $S^n\subseteq \R^{n+1}$ since even spherical harmonics take on equal values at antipodal points. Since $\Psi_{_{(\leb, \leb+1]}}$ are embeddings for $\leb$ large, it is natural to study $\Psi_{_{(\leb, \leb+1]}}(M)$ as a metric space equipped with the distance, $\Dist_\leb$, induced by the embedding:
\begin{align}
\Dist_\leb^2(x,y):&=\norm{\Psi_{_{(\leb,\leb+1]}}(x)-\Psi_{_{(\leb, \leb+1]}}(y)}_{l^2(\R^{m_\leb})}^2\\
&=\frac{\lr{2\pi}^n}{2\leb^{n-1}}\lr{E_{_{(\leb, \leb+1]}}(x,x)+E_{_{(\leb, \leb+1]}}(y,y)-2E_{_{(\leb, \leb+1]}}(x,y)}\label{E:dist def}
\end{align}
Theorem \ref{T:Dist Function} $-$ proved in \S \ref{S:Dist Function Pf} $-$ gives precise asymptotics for $\Dist_\leb(x,y)$ in terms of $\dist(x,y)$.
\begin{theorem}\label{T:Dist Function}
 Let $(M,g)$ be a compact, smooth,  Riemannian manifold of dimension $n \geq 2$, with no boundary. Suppose further that every $x\in M$ is non self-focal and all pairs $x,y\in M$ are mutually non-focal. As $\leb \gives \infty,$ we have
 \begin{equation}
\sup_{x,y\in M}\abs{\frac{1}{\leb^2\dist^2(x,y)}\left[\Dist_\leb^2(x,y)- \lr{\vol(S^{n-1})-{(2\pi)^{\frac{n}{2}}}\frac{J_{\frac{n-2}{2}}\lr{\leb \dist(x,y)}}{\lr{\leb \dist(x,y)}^{\frac{n-2}{2}}}}\right]}=o\lr{1}.\label{E:Induced Distance}
\end{equation}
\end{theorem}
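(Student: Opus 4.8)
The plan is to rewrite $\Dist_\leb^2$ using the local Weyl law \eqref{E:LWL}, peel off the claimed main term, and then control the two error terms that remain by splitting $M\x M$ into a ``well‑separated'' region and a ``nearly‑coincident'' region. (When $x=y$ both sides of \eqref{E:Induced Distance} vanish to second order, so the claim is read by continuity; assume $x\neq y$ and write $d:=\dist(x,y)$.)

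First, from \eqref{E:dist def} and \eqref{E:LWL} applied to $E_{\leb+1}$ and $E_\leb$,
\[
E_{(\leb,\leb+1]}(z,w)=\frac{1}{(2\pi)^{n/2}}\int_\leb^{\leb+1}s^{n-1}\,\frac{J_{\frac{n-2}{2}}\big(s\,\dist(z,w)\big)}{\big(s\,\dist(z,w)\big)^{\frac{n-2}{2}}}\,ds+\rho(z,w),\qquad \rho(z,w):=R(z,w,\leb+1)-R(z,w,\leb),
\]
where the integral comes from passing to polar coordinates and using \eqref{E:Fourier-Bessel}, exactly as in the derivation of Theorem \ref{T:Main}, with the convention $\lim_{t\to0}J_{\frac{n-2}{2}}(t)/t^{\frac{n-2}{2}}=1/(2^{\frac{n-2}{2}}\Gamma(\tfrac n2))$ when $z=w$. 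Since every point of $M$ is non self‑focal and all pairs are mutually non‑focal, Theorem \ref{T:LWL aperiodic} applies with $K=M\x M$, so $\sup_{z,w\in M}\abs{\rho(z,w)}\le\eta_\leb\leb^{n-1}$ for some $\eta_\leb\to0$. Substituting into \eqref{E:dist def} and using $(2\pi)^{n/2}/(2^{\frac{n-2}{2}}\Gamma(\tfrac n2))=\vol(S^{n-1})$,
\[
\Dist_\leb^2(x,y)=\vol(S^{n-1})-(2\pi)^{n/2}\frac{J_{\frac{n-2}{2}}(\leb d)}{(\leb d)^{\frac{n-2}{2}}}+\mathcal E^{\mathrm{int}}_\leb(x,y)+\mathcal E^{\mathrm{rem}}_\leb(x,y),
\]
where $\mathcal E^{\mathrm{rem}}_\leb:=\frac{(2\pi)^n}{2\leb^{n-1}}\big(\rho(x,x)+\rho(y,y)-2\rho(x,y)\big)$ and $\mathcal E^{\mathrm{int}}_\leb$ is the error of replacing $s^{n-1}J_{\frac{n-2}{2}}(sd)/(sd)^{\frac{n-2}{2}}$ by its value at $s=\leb$ over $[\leb,\leb+1]$. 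By the mean value theorem and standard bounds on $J_\nu$ and $J_\nu'$ one checks $\abs{\mathcal E^{\mathrm{int}}_\leb}=O(\leb d^2)$ when $\leb d\le1$ and $\abs{\mathcal E^{\mathrm{int}}_\leb}=O(\leb^{-1}+\leb^{-(n-1)/2}d^{(3-n)/2})$ when $\leb d\ge1$, so $\sup_{x,y}\abs{\mathcal E^{\mathrm{int}}_\leb(x,y)}/(\leb^2d^2)=O(\leb^{-1/2})\to0$; also $\sup_{x,y}\abs{\mathcal E^{\mathrm{rem}}_\leb(x,y)}=o(1)$.

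Now set $\ep_\leb:=\eta_\leb^{1/4}\to0$ and split according to $d\ge\ep_\leb/\leb$ or $d<\ep_\leb/\leb$. On $\{d\ge\ep_\leb/\leb\}$ we have $\leb^2d^2\ge\ep_\leb^2$, so $\abs{\mathcal E^{\mathrm{rem}}_\leb}/(\leb^2d^2)\lesssim\eta_\leb/\ep_\leb^2=\eta_\leb^{1/2}\to0$, which with the bound on $\mathcal E^{\mathrm{int}}_\leb$ gives \eqref{E:Induced Distance} there. On $\{d<\ep_\leb/\leb\}$ (so $\leb d\to0$) argue directly: since $E_{(\leb,\leb+1]}$ is an orthogonal projection, $\inprod{E_{(\leb,\leb+1]}(\cdot,z)}{E_{(\leb,\leb+1]}(\cdot,w)}_{L^2(M)}=E_{(\leb,\leb+1]}(z,w)$, so the bracket in \eqref{E:dist def} equals $\norm{F_\leb(x)-F_\leb(y)}_{L^2(M)}^2$ with $F_\leb(z):=E_{(\leb,\leb+1]}(\cdot,z)$. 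Taylor expanding $t\mapsto F_\leb(\exp_y(tv))$, $v:=\exp_y^{-1}(x)$, $|v|_{g_y}=d$, and bounding the remainder by Hörmander's classical cluster estimates $\sum_{\leb<\leb_j\le\leb+1}\abs{\nabla^k\varphi_j(z)}^2=O(\leb^{n-1+2k})$, gives $\norm{F_\leb(x)-F_\leb(y)}_{L^2}^2=v^av^b\sum_{\leb<\leb_j\le\leb+1}\partial_a\varphi_j(y)\partial_b\varphi_j(y)+O(d^3\leb^{n+2})+O(d^4\leb^{n+3})$. The on‑diagonal pointwise Weyl law with two derivatives, which holds with remainder $o(\leb^{n+1})$ uniformly in $y\in M$ under the present hypotheses, yields $\sum_{\leb<\leb_j\le\leb+1}\partial_a\varphi_j(y)\partial_b\varphi_j(y)=(n+2)\leb^{n+1}c^{ab}(y)+o(\leb^{n+1})$ with $c^{ab}(y)=(2\pi)^{-n}\int_{|\xi|_{g_y}<1}\xi^a\xi^b\,d\xi/\sqrt{\abs{g_y}}$, and $v^av^bc^{ab}(y)=(2\pi)^{-n}\frac{\vol(S^{n-1})}{n(n+2)}d^2$ by rotational symmetry of the unit ball. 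Feeding this back, $\Dist_\leb^2(x,y)=\frac{\vol(S^{n-1})}{2n}\leb^2d^2+o(\leb^2d^2)$; on the other hand the Taylor expansion of the Bessel term at $\leb d\to0$ gives $\vol(S^{n-1})-(2\pi)^{n/2}J_{\frac{n-2}{2}}(\leb d)/(\leb d)^{\frac{n-2}{2}}=\frac{\vol(S^{n-1})}{2n}\leb^2d^2+O(\leb^4d^4)=\frac{\vol(S^{n-1})}{2n}\leb^2d^2+o(\leb^2d^2)$. Subtracting proves \eqref{E:Induced Distance} on $\{d<\ep_\leb/\leb\}$ as well.

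The genuinely delicate point is the nearly‑coincident regime $\{d<\ep_\leb/\leb\}$: there the uniform $o(\leb^{n-1})$ control on $R$ from Theorem \ref{T:LWL aperiodic} is too crude, merely giving that the bracket in \eqref{E:Induced Distance} is $O(\leb^2d^2)$ rather than $o(\leb^2d^2)$, so one must instead invoke the improved remainder in the two‑derivative pointwise Weyl law on the diagonal — equivalently, a uniform version of the pointwise convergence $\leb^{-2}\Psi_{(\leb,\leb+1]}^{*}(g_{euc})\to\mathrm{const}\cdot g$ recorded in \cite[Proposition 2.3]{Zel2}, whose uniformity over $M$ is supplied by the non self‑focal hypothesis via the Fourier Tauberian method (cf. \cite{Saf, SV}). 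Everything else is routine bookkeeping with Bessel asymptotics and Hörmander's $O$‑bounds for spectral clusters.
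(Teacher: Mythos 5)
Your proof is correct and follows essentially the same route as the paper: away from the scaled diagonal you use the uniform $o(\leb^{n-1})$ bound of Theorem \ref{T:LWL aperiodic}, and in the regime $\leb\dist(x,y)\to 0$ you re-derive what is precisely the paper's estimate \eqref{E:Very Near-Diag}, namely a second-order Taylor expansion whose Hessian term comes from the differentiated on-diagonal Weyl law of \cite[Proposition 2.3]{Zel2} and whose remainder is controlled by third-derivative cluster bounds. The only difference is presentational: the paper imports \eqref{E:Very Near-Diag} from its proof of Theorem \ref{Embedding}, whereas you rebuild the same estimate via the reproducing-kernel identity for $E_{(\leb,\leb+1]}$.
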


\subsection{Discussion of Theorem \ref{close to diagonal}} \label{S: discussion Thm 1}
Theorem \ref{close to diagonal} is an extension of H\"ormander's pointwise Weyl law \cite[Theorem 4.4]{Hor}. H\"ormander proved that there exists $\ep>0$ so that if the Riemannian distance $\dist(x,y)$ between $x$ and $y$ is less than $\ep,$ then 
\begin{equation}\label{E:H LWL}
E_\leb\lr{x,y}=\frac{\leb^n}{(2\pi)^n}\int_{|\xi|_{g_y}<1} e^{i\leb\psi(x,y,\xi)}\,\frac{d\xi}{\sqrt{\abs{g_y}}} \, +\,O\lr{\leb^{n-1}},
\end{equation}
where in H\"ormander's terminology, the phase function $\psi$ is adapted to the principal symbol $\abs{\xi}_{g_y}$ of $\sqrt{\Delta_g}.$ After \cite[Theorem 4.4]{Hor}, H\"ormander remarks that the choice of $\psi$ is not unique. However, every adapted phase function satisfies 
\[\psi(x,y,\xi)=\inprod{x-y}{\xi}+O(\abs{x-y}^2\abs{\xi}).\] 
In particular, since $\langle\exp_y^{-1}(x),\xi \rangle_{g_y}=\inprod{x-y}{\xi}+O(\abs{x-y}^2\abs{\xi})$, Taylor expanding \eqref{E:H LWL} yields for any $r_0>0$
\[\sup_{\dist(x,y)<r_0/\leb}\abs{E_\leb\lr{x,y}-\frac{\leb^n}{(2\pi)^n}\int_{|\xi|_{g_y}<1} e^{i\leb\langle \exp_y^{-1}(x),\,\xi\rangle_{g_y}}\,\frac{d\xi}{\sqrt{\abs{g_y}}}}=O(\leb^{n-1}).\]
Changing from one adapted phase to another produces, a priori, an error of $O(\leb^{n-1})$ in \eqref{E:H LWL}. With the additional assumption that $x,y$ are near a non self-focal point, Theorem \ref{close to diagonal} therefore extends H\"ormander's result in two ways. First, our careful choice of phase function $\langle \exp_y^{-1}(x),\,\xi\rangle_{g_y}$ allows us to obtain a $o(\leb^{n-1})$ estimate on $R$ while keeping the amplitude equal to $1$. Second, we allow $\dist(x,y)$ to shrink arbitrarily slowly with $\leb.$

H\"ormander's phase functions $\psi(x,y,\xi)$ are difficult to analyze directly when $x\neq y$ since they are the solutions to certain Hamilton-Jacobi equations (cf \cite[Definition 3.1]{Hor} and \cite[(29.1.7), vol. 4]{Hor 1-4}) which we can not describe explicitly. Instead, in proving Theorem \ref{close to diagonal}, we use a parametrix for the half-wave operator $U(t)=e^{-it\sqrt{\Delta_g}}$ with the geometric phase function $\phi:\R \x M \x T^*M:\gives \R$ given by $\phi(t,x,y,\xi)=\inprod{\exp_y^{-1}(x)}{\xi}-t\abs{\xi}_{g_y}.$ Such a parametrix was previously used by Zelditch in \cite{Zel2}, where a construction for the amplitude was omitted. Our construction, given in \S \ref{S:Parametrix}, makes clear the off-diagonal behavior of $E_\leb(x,y)$ and uses the results of Laptev-Safarov-Vassiliev \cite{LSV}, who treat FIOs with global phase functions. 

Using the phase function $\phi$ simplies our computations considerably since the half-density factor $\sqrt{\det \phi_{x,\xi}(t,x,y,\xi)}$, that comes up in the usual parametrix construction for $U(t)$ acting on half-densities, is independent of $t,\xi.$ This makes it easy to obtain the amplitude in a parametrix for $U(t)$ acting on functions from a that of $U(t)$ acting on half-densities. For more details, see the outline of the proof of Theorem \ref{close to diagonal} given in \S \ref{S: outline main thm} as well as \S \ref{S:Parametrix}, especially \eqref{E:Theta}.


The error estimate in \eqref{E:H LWL} is sharp on Zoll manifolds (see \cite{Zel3}), such as the round sphere. The majority of the prior estimates on $R(x,y,\leb)$ actually treat the case $x=y.$ Notably, B\'erard showed in \cite{Be} that on all compact manifolds of dimension $n\geq 3$ with non-positive sectional curvatures and on all Riemannian surfaces without conjugate points we have $R(x,x,\leb)= O( \leb^{n}/\log \leb).$ The $O(\leb^{n-1})$ error in the Weyl asymptotics for the spectral counting function
\begin{align*}
  \#\set{j:\; \leb_j \in [0,\leb]}~&=\int_M  E_\leb(x,x)dv_g(x)\\
&=\lr{\frac{\leb}{2\pi}}^n\vol_g(M)\cdot \vol_{\R^n}(B_1)+\int_M R(x,x,\leb) dv_g(x),
\end{align*}
has also been improved under various assumptions on the structure of closed geodesics on $(M,g)$ (see \cite{Be, Co, DG, Iv, Nic, PeT, R, SV}). For instance, Duistermaat-Guillemin \cite{DG} and Ivrii \cite{Iv} prove that $\int_M R(x,x,\leb) dv_g(x)=o(\leb^{n-1})$ if $(M,g)$ is aperiodic (i.e the set of all closed geodesics has measure zero in $S^*M$).

Also related to this article are lower bounds for $R(x,y, \leb)$ obtained by Jakobson-Polterovich in \cite{JP} as well as estimates on averages of $R(x,y, \leb)$ with respect to either $y \in M$ or $\leb \in \R_{>0}$ studied by Lapointe-Polterovich-Safarov in \cite{LPS}. 

\subsection{Discussion of Theorem \ref{T:Main}} \label{S: discussion Thm 2}
The scaling asymptotics \eqref{E:Scaling Limit 0} were first stated - without proof and without any assumptions on $\mathcal L_{x_0}$ - by Zelditch in \cite[Theorem 2.1]{Zel1}. When $(M,g)=(S^2,g_{round})$ is the standard $2$-sphere, the square roots of the Laplace eigenvalues are 
$\leb_k=k\cdot \sqrt{1+{1}/{k}}$ for $k\in \Z_+,$
and $\mathcal L_{x_0}=S_{x_0}^*M$ since the geodesic flow is $2\pi$-periodic. There is therefore no $x_0\in S^2$ satisfying the assumptions of Theorem \ref{T:Main}. Nonetheless, Equation \eqref{E:Scaling Limit 2} holds with $E_\leb$ replaced by the kernel of the spectral projection onto the $\leb_k^2$ eigenspace and is known as Mehler-Heine asymptotics (cf \S 8.1 in \cite{Sze}). More generally, on any Zoll manifold, the square roots of Laplace eigenvalues come in clusters that concentrate along an arithmetic progression. The width of the $k^{th}$ cluster is on the order of $k^{-1},$ and we conjecture that the scaling asymptotics \eqref{E:Scaling Limit 2} hold for the spectral projectors onto these clusters (see \cite{Zel3} for background on the spectrum of Zoll manifolds). 

If one perturbs the standard metric on $S^2$ or on a Zoll surface, one can create smooth metrics possessing self-focal points $x_0$ where only a fraction of the measure of initial directions at $x_0$ give geodesics that return to $x_0$. These points complicate the remainder estimate for the general case. Indeed, it was pointed out to the authors by Safarov that even on the diagonal there is a two-term asymptotic formula with the second term of the form $Q(x,\leb)\leb^{n-1},$ where $Q$ is a bounded function.
The function $Q$ is identically zero if $x_0$ is non self-focal or if a full measure of geodesics emanating from $x_0$ return to $x_0$ at the same time. In general, however, $Q$ will contribute an extra term on the order of $\leb^{n-1}$ to the asymptotics in \eqref{E:Scaling Limit 2}. We refer the interested reader to \S 1.8 in \cite{SV}. 



\subsection{Notation}\label{Ss:Notation} Given a Riemannian manifold $(M,g)$ we write $\vol_g(M)$ for its volume, $\dist:M \times M \to \R$ for the induced distance function and $\inj(M,g)$ for its injectivity radius. For $x \in M$ we write $S^*_xM$ for the unit sphere in the co-tangent fiber $T_x^*M.$ We denote by $\langle\cdot , \cdot\rangle_{g_x}: T_x^*M \times T_x^*M \to \R$ the Riemannian inner product on $T_x^*M$ and by $|\cdot|_{g_x}$ the corresponding norm. When $M=\R^n$ we simply write $\langle\cdot , \cdot\rangle$ and $|\cdot|.$ In addition, for $(x,\xi) \in T^*M,$ we will sometimes write $g^{{1}/{2}}_x (\xi)$ for the square root of the matrix $g_x$ applied to the covector $\xi$, and we write $\abs{g_x}$ for the determinant of $g_x.$ 

We denote by $S^k$ the space of classical symbols of degree $k,$ and we will write $S^k_{hom}\subseteq S^k$ for those symbols that are homogeneous of degree $k$. We also denote by $\Psi^k(M)$ the class of pseudodifferential operators of order $k$ on $M.$

\subsection{Acknowledgements} It is our pleasure to thank I. Polterovich, C. Sogge, J. Toth  and particularly Y. Safarov and S. Zelditch for providing detailed comments on earlier drafts of this article. We are also grateful to an anonymous referee whose suggestions and corrections significantly improved the exposition. In particular, the referee pointed us to \cite{LSV}, simplifying our previous parametrix construction in \S 3, and exposed an error in the original version of Proposition \ref{P:Smoothed Estimate}. Fixing our mistake ultimately lead to a more efficient proof. The first author would also like to thank B. Xu for sharing unpublished proofs of some results in \cite{Bin}.

\section{Outline for  the Proof of Theorem \ref{close to diagonal} }\label{S: outline main thm}
Fix $(M,g)$ and a non self-focal point $x_0\in M$. Theorem \ref{close to diagonal} follows from the existence of a constant $c>0$ so that for all $\ep>0$ there exist $\tilde \leb_\ep>0$, an open neighborhood $\mathcal U_\ep$ of $x_0$, and a positive constant $c_{\ep}$,  so that 
\begin{equation}
\sup_{x,y\in \mathcal U_\ep}|R(x,y, \leb)|\leq c\, \ep\,\leb^{n-1}+c_{\ep}\leb^{n-2}\label{E:Main Goal bis}
\end{equation}
for all $\leb \geq \tilde \leb_\ep$.
Indeed, if $r_\leb$ is a positive function with $\lim_{\leb \to \infty} r_\leb=0$, then it suffices to choose $\leb_\ep:=\max\{ \tilde \leb_\ep\,, \,\inf\{\leb:\, B(x_0, r_\leb) \subset \mathcal U_\ep \}\}$ to get  
$$\sup_{x,y\in B(x_0, r_\leb)}|R(x,y, \leb)|\leq c\, \ep\,\leb^{n-1}+c_{\ep}\leb^{n-2}\qquad \forall \leb \geq \leb_\ep.$$

By the definition \eqref{E:LWL} of $R$ and the definition \eqref{E:Big Ker Def} of $E_\leb$, we seek to find a constant $c>0$ so that for all $\ep>0$ there exist $\tilde \leb_\ep>0$, an open neighborhood $\mathcal U_\ep$ of $x_0$, and a positive constant $c_{\ep}$ satisfying 
\begin{equation}
\sup_{x,y\in \mathcal U_\ep}\abs{E_\leb(x,y)-\frac{\leb^n}{(2\pi)^n}\int_{\abs{\xi}_{g_y}<1}e^{i\leb\langle\exp_y^{-1}(x) ,\xi\rangle_{g_y}} \frac{d\xi}{\sqrt{|g_y|}}} \leq c\, \ep\,\leb^{n-1}+c_{\ep}\leb^{n-2}\label{E:Main Goal}
\end{equation}
for all $\leb \geq \tilde \leb_\ep$. We prove \eqref{E:Main Goal} using the so-called wave kernel method. That is, we use that the derivative of the spectral function is the inverse Fourier transform of the fundamental solution of the half-wave equation on $(M,g):$
\begin{equation}\label{E:WKM}
 E_\leb(x,y)=\int_0^\leb \sum_j \delta(\mu-\leb_j)\varphi_j(x)\varphi_j(y)~d\mu =\int_0^\leb \mathcal F_{t\gives \mu}^{-1}(U(t,x,y))(\mu)\, d\mu,
\end{equation}
where $\mathcal F^{-1}$ denotes the inverse Fourier transform and $U(t,x,y)$ is the Schwartz kernel of $e^{-it\sqrt{\Delta_g}}.$ The singularities of $U(t,x,y)$ control the $\leb\gives \infty$ behavior of $E_\leb.$ We first study the contribution of the singularity of $U(t,x,y)$ coming at $t=\dist(x,y)$ by taking a Schwartz function $\rho \in \mathcal S(\R)$ that satisfies $\supp \lr{\hat{\rho}}\subseteq \lr{-\inj(M,g),\inj(M,g)}$ and 
\begin{equation}\label{E:Rho Def}
 \hat{\rho}(t)= 1 \qquad \text{for all}\qquad  \abs{t}<\tfrac{1}{2}{\,\inj(M,g)}.
 \end{equation}
We prove in \S \ref{S Est 1} the following proposition, which shows that \eqref{E:Main Goal} holds with $E_\leb$ replaced by $\rho*E_\leb.$
\begin{proposition}[Smoothed Projector]\label{P:Smoothed Est 1}
Let $(M,g)$ be a compact, smooth, Riemannian manifold of dimension $n \geq 2$, with no boundary. Then, there exist constants $c,C>0$ so that 
\begin{equation}
\Big|\rho*E_\leb(x,y)-\frac{1}{(2\pi)^n}\int_{\abs{\xi}_{g_y}<\leb}e^{i\langle\exp_y^{-1}(x),\xi\rangle_{g_y}}\frac{d\xi}{\sqrt{|g_y|}} \Big|
\leq   c \dist(x,y) \leb^{n-1} +C \leb^{n-2}  \label{E:Main Goal Smooth 1}
\end{equation}
for all $x,y \in M$ with $\dist(x,y)\leq\tfrac{1}{2}\inj(M,g)$ and all $\leb>0.$
\end{proposition}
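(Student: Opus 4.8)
\emph{Strategy.} The plan is to apply the wave-kernel method \eqref{E:WKM} to $\rho*E_\leb$, which gives
\[
\rho*E_\leb(x,y)=\int_{-\infty}^{\leb}\mathcal F^{-1}_{t\to\mu}\lr{\hat\rho(t)\,U(t,x,y)}\,d\mu.
\]
Because $\supp\hat\rho\subset\lr{-\inj(M,g),\inj(M,g)}$ and $\dist(x,y)\le\tfrac12\inj(M,g)<\inj(M,g)$, the only singularities of $t\mapsto U(t,x,y)$ inside $\supp\hat\rho$ occur at $t=\pm\dist(x,y)$, and these are captured by the geometric-phase parametrix of \S\ref{S:Parametrix}:
\[
U(t,x,y)=\frac{1}{(2\pi)^n}\int_{T_y^*M}e^{i\phi(t,x,y,\xi)}\,a(t,x,y,\xi)\,\frac{d\xi}{\sqrt{\abs{g_y}}}+R_U(t,x,y),\quad \phi=\langle\exp_y^{-1}(x),\xi\rangle_{g_y}-t\abs{\xi}_{g_y},
\]
where $a=1+r_{-1}$ with $r_{-1}$ a classical symbol of order $-1$ satisfying $r_{-1}(0,x,y,\xi)\equiv 0$, and $R_U$ is smooth on $\supp\hat\rho\times\setst{(x,y)}{\dist(x,y)\le\tfrac12\inj(M,g)}$, uniformly. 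The $R_U$-term contributes only $O(1)$ to $\rho*E_\leb$, uniformly in $x,y$: its $\mathcal F^{-1}_{t\to\mu}$ is Schwartz in $\mu$, and it is then integrated over $(-\infty,\leb]$. I note for later use that $\hat\rho\equiv 1$ near $0$ forces $\hat\rho'(0)=0$.

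\emph{Reduction to a shell integral.} Substituting the parametrix and (after the routine justification of the interchange of integrals) carrying out the $t$-integral first, one gets $\mathcal F^{-1}_{t\to\mu}\lr{\hat\rho(t)a(t,x,y,\xi)e^{-it\abs{\xi}_{g_y}}}=\beta(\mu-\abs{\xi}_{g_y},x,y,\xi)$, where $\beta(\tau,x,y,\xi):=\mathcal F^{-1}_{t\to\tau}\bigl(\hat\rho(t)\,a(t,x,y,\xi)\bigr)$ is Schwartz in $\tau$, uniformly in $(x,y,\xi)$, with $\int_\R\beta(\tau,x,y,\xi)\,d\tau=\hat\rho(0)a(0,x,y,\xi)=1$. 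The subsequent $\mu$-integration over $(-\infty,\leb]$ replaces $\beta$ by $F_\beta(\leb-\abs{\xi}_{g_y},x,y,\xi)$ with $F_\beta(\sigma,\cdot):=\int_{-\infty}^{\sigma}\beta(\tau,\cdot)\,d\tau$; since $F_\beta(+\infty,\cdot)=1$ and $F_\beta(-\infty,\cdot)=0$, we write $F_\beta=\mathbf 1_{\{\,\cdot\,>0\}}+G_\beta$ with $G_\beta$ Schwartz in $\sigma$. This produces
\[
\rho*E_\leb(x,y)=\frac{1}{(2\pi)^n}\int_{\abs{\xi}_{g_y}<\leb}e^{i\langle\exp_y^{-1}(x),\xi\rangle_{g_y}}\,\frac{d\xi}{\sqrt{\abs{g_y}}}+\mathcal R_\leb(x,y)+O(1),
\]
where the first term is exactly the main term of \eqref{E:Main Goal Smooth 1} and $\mathcal R_\leb(x,y)=\frac{1}{(2\pi)^n}\int_{T_y^*M}G_\beta(\leb-\abs{\xi}_{g_y},x,y,\xi)\,e^{i\langle\exp_y^{-1}(x),\xi\rangle_{g_y}}\,\frac{d\xi}{\sqrt{\abs{g_y}}}$; by the Schwartz decay of $G_\beta$ the integrand of $\mathcal R_\leb$ lives in the shell $\abs{\xi}_{g_y}\approx\leb$.

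\emph{Estimating the shell integral.} Split $G_\beta=G+F^{(-1)}_\beta$ according to $a=1+r_{-1}$. Here $G(\sigma)=\int_{-\infty}^{\sigma}\rho-\mathbf 1_{\{\sigma>0\}}$ is Schwartz and, crucially, $\int_\R G=0$ (this integral is, up to a constant, the first moment $\int_\R u\,\rho(u)\,du$, a multiple of $\hat\rho'(0)=0$); while $F^{(-1)}_\beta(\sigma,x,y,\xi)=\int_{-\infty}^{\sigma}\mathcal F^{-1}_{t\to\tau}\bigl(\hat\rho(t)\,r_{-1}(t,x,y,\xi)\bigr)\,d\tau$ is Schwartz in $\sigma$, uniformly, with an extra factor $\lr{1+\abs{\xi}}^{-1}$ inherited from $r_{-1}\in S^{-1}$ (using $r_{-1}(0,\cdot)\equiv 0$ to obtain decay also as $\sigma\to+\infty$). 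The $F^{(-1)}_\beta$-part of $\mathcal R_\leb$ is $O(\leb^{n-2})$: in polar coordinates it is $\lesssim\int_0^\infty\lr{1+\abs{\leb-r}}^{-N}\lr{1+r}^{-1}r^{n-1}\,dr\lesssim\leb^{n-2}$, the factor $\lr{1+\abs{\leb-r}}^{-N}$ localizing $r\approx\leb$. For the $G$-part, pass to $g_y$-orthonormal coordinates on $T_y^*M$, then to polar coordinates, and apply \eqref{E:Fourier-Bessel} with $\abs{\exp_y^{-1}(x)}_{g_y}=\dist(x,y)$:
\[
\frac{1}{(2\pi)^n}\int_{T_y^*M}G\lr{\leb-\abs{\xi}_{g_y}}e^{i\langle\exp_y^{-1}(x),\xi\rangle_{g_y}}\frac{d\xi}{\sqrt{\abs{g_y}}}=\frac{1}{(2\pi)^n}\int_0^\infty G(\leb-r)\,\mathcal J\lr{r\,\dist(x,y)}\,r^{n-1}\,dr,
\]
where $\mathcal J(z):=(2\pi)^{n/2}J_{\frac{n-2}{2}}(z)\,z^{-\frac{n-2}{2}}$ is smooth on $[0,\infty)$ with $\mathcal J$, $\mathcal J'(z)=-(2\pi)^{n/2}J_{\frac n2}(z)\,z^{-\frac{n-2}{2}}$ and $\mathcal J''$ all bounded. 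Substitute $s=\leb-r$; the range $\abs{s}>\leb/2$ is negligible (rapid decay of $G$ beats the polynomial growth of $\mathcal J(\,\cdot\,)(\,\cdot\,)^{n-1}$), and on $\abs{s}\le\leb/2$ Taylor-expand $h(s):=\mathcal J\lr{(\leb-s)\dist(x,y)}(\leb-s)^{n-1}$ to first order about $s=0$. The $h(0)$-term vanishes since $\int_\R G=0$; the $h'(0)$-term and the second-order remainder are dominated by $\lr{\norm{sG}_{L^1}+\norm{s^2G}_{L^1}}\sup_{r\asymp\leb}\lr{\abs{\tfrac{d}{dr}\bigl[\mathcal J(r\,\dist(x,y))r^{n-1}\bigr]}+\abs{\tfrac{d^2}{dr^2}\bigl[\mathcal J(r\,\dist(x,y))r^{n-1}\bigr]}}$, and since each $r$-derivative falling on $\mathcal J(r\,\dist(x,y))$ brings out a factor $\dist(x,y)$ while $\mathcal J,\mathcal J',\mathcal J''$ are bounded and $\dist(x,y)\le\tfrac12\inj(M,g)$, these suprema are $O\lr{\dist(x,y)\leb^{n-1}+\leb^{n-2}}$. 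Hence $\abs{\mathcal R_\leb(x,y)}\le c\,\dist(x,y)\leb^{n-1}+C\leb^{n-2}$, which with the $O(1)$ errors proves \eqref{E:Main Goal Smooth 1} for $\leb\gtrsim1$; the bounded range of $\leb$ is elementary and omitted.

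\emph{Main obstacle.} The delicate point is to arrange the cancellations so that \emph{no} term of size $\leb^{n-1}$ without the $\dist(x,y)$ prefactor survives — such a term would destroy the estimate near the diagonal. This rests on the three facts about the parametrix used above: $a(0,x,y,\xi)\equiv 1$, so that replacing $a$ by $\mathbf 1_{\{\abs{\xi}<\leb\}}$ introduces no symbol-level correction at scale $\leb^n$ or $\leb^{n-1}$; the principal symbol of $a$ is the constant $1$, so that in the shell $\abs{\xi}_{g_y}\approx\leb$ the relevant profile is exactly the Schwartz function $G$ determined by $\rho$; and $\int_\R G=0$, which kills the leading Taylor coefficient and forces the surviving error to carry at least one $r$-derivative of $\mathcal J(r\,\dist(x,y))$, hence at least one power of $\dist(x,y)$ — the origin of the $c\,\dist(x,y)\leb^{n-1}$ term. (That $a$ has the simple form $1+(\text{order }-1)$ is precisely what the $t$- and $\xi$-independence of the half-density factor $\sqrt{\det\phi_{x,\xi}}$ buys, cf.\ \eqref{E:Theta}.) With these in hand, the remaining steps — Fubini for the oscillatory integrals, the polar-coordinate reductions, and the control of the Schwartz tails — are routine.
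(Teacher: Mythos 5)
Your route is genuinely different from the paper's. The paper first proves an asymptotic for the derivative $\partial_\mu(\rho*EQ^*)(x,y,\mu)$ (Proposition \ref{P:Smoothed Estimate}) by combining the parametrix with Sogge's asymptotics for $\int_{S_y^*M}e^{i\langle \eta,\w\rangle_{g_y}}d\w$ and a two-dimensional stationary phase in $(t,r)$ about the critical points $(\pm\dist(x,y),1)$, and only afterwards integrates in $\mu$. You instead carry out the $t$-integral exactly (it is the Fourier transform of the compactly supported function $\hat\rho\,a$), reducing everything to a shell integral against the profile $F_\beta-\mathbf 1$, and you extract the cancellation from two moment conditions, $\int_\R G=0$ (equivalently $\hat\rho'(0)=0$) and $r_{-1}(0,\cdot)\equiv 0$, together with the observation that each radial derivative of $\mathcal J(r\dist(x,y))$ costs a factor of $\dist(x,y)$. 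This is more elementary (no stationary phase, and only boundedness of $\mathcal J,\mathcal J',\mathcal J''$ rather than Bessel asymptotics) and it isolates transparently where the $\dist(x,y)\leb^{n-1}$ term comes from. The essential input, the geometric-phase parametrix with amplitude $1+S^{-1}$ whose order $-1$ part vanishes at $t=0$, is the same.

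There is, however, one concrete gap: the parametrix of Proposition \ref{P:Parametrix} is not of the form you assert. Its full amplitude is $\chi(\dist(x,y))\,\Theta(x,y)^{-1/2}A(t,y,\xi)$, and the $x$-dependent half-density factor $\Theta(x,y)^{-1/2}=1+O(\dist^2(x,y))$ is of order $0$ in $\xi$ and does not equal $1$ at $t=0$ off the diagonal; it cannot be absorbed into your $r_{-1}$. Consequently $\int_\R\beta\,d\tau=\hat\rho(0)a(0,\cdot)=\Theta(x,y)^{-1/2}\neq 1$, and your computation produces $\Theta(x,y)^{-1/2}$ times the main term of \eqref{E:Main Goal Smooth 1}. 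You must therefore additionally bound $\lr{\Theta^{-1/2}-1}\frac{1}{(2\pi)^n}\int_{|\xi|_{g_y}<\leb}e^{i\langle\exp_y^{-1}(x),\xi\rangle_{g_y}}\frac{d\xi}{\sqrt{|g_y|}}$. This is true but not free: one needs $\Theta^{-1/2}-1=O(\dist^2(x,y))$ together with the full decay $O\lr{\leb^n(1+\leb\dist(x,y))^{-(n+1)/2}}$ of the Fourier transform of the ball, which gives a bound $\leq c\,\dist(x,y)\leb^{n-1}$ in both regimes $\leb\dist(x,y)\le 1$ and $\leb\dist(x,y)\ge 1$ (the weaker sphere-type decay $(1+\leb\dist)^{-(n-1)/2}$ would fail for $n=2$). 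The paper performs exactly this step by a single integration by parts in $\xi$ at the end of \S\ref{S Est 1}. With that correction inserted, your argument goes through.
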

Note that Proposition \ref{P:Smoothed Est 1} does not assume that $x,y$ are near a non self-focal point. The reason is that convolving $E_\leb$ with $\rho$ multiplies the half-wave kernel $U(t,x,y)$ in \eqref{E:WKM} by the Fourier transform $\hat{\rho}(t),$ which cuts out all but the singularity at $t=\dist(x,y).$ The proof of \eqref{E:Main Goal Smooth 1} relies on the construction in \S \ref{S:Parametrix} of a short time parametrix for $U(t),$ which differs from the celebrated H\"ormander parametrix becaues it uses the coordinate-independent phase function 
\begin{equation}\label{E: phi}
\phi(t,x,y,\xi):=\langle {\exp_y^{-1}(x)}, \xi \rangle_{g_y}-t\abs{\xi}_{g_y}\qquad (t,x,y,\xi)\in \R \x M \x T^*M. 
\end{equation} 


It remains to estimate the difference $|E_\leb(x,y)- \rho*E_\leb(x,y)|,$ which is the content of the following result.

\begin{proposition}[Smooth vs rough Projector]\label{P:Smoothed Est 2} 
Let $(M,g)$ be a compact, smooth,  Riemannian manifold of dimension $n \geq 2$, with no boundary.  Let $x_0\in M$ be a non self-focal  point. Then, there exists $c>0$ so that for all   $\ep>0$  there exist  an open neighborhood $\mathcal U_\ep$ of $x_0$ and a  positive constant $c_{\ep}$ with
\begin{equation}
\sup_{x,y\in \mathcal U_\ep}\abs{E_\leb(x,y)-\rho*E_\leb(x,y)}\leq c\, \ep \leb^{n-1}+c_{\ep}\leb^{n-2}\label{E:Main Goal Smooth 2}
\end{equation}
 for all $\leb\geq 1$.
\end{proposition}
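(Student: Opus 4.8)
\textbf{Proof strategy for Proposition \ref{P:Smoothed Est 2}.}
The plan is to study $E_\leb-\rho*E_\leb$ through the wave kernel. Since convolution with $\rho$ multiplies $U(t,x,y)$ by $\hat\rho(t)$, a short computation starting from \eqref{E:WKM} gives the exact identity
\[
E_\leb(x,y)-\rho*E_\leb(x,y)=\sum_j\psi(\leb-\leb_j)\,\varphi_j(x)\varphi_j(y)=\frac1{2\pi}\int_\R\hat\psi(t)\,e^{i\leb t}\,U(t,x,y)\,dt,
\]
where $\psi(\nu)=\int_\nu^\infty\rho$ for $\nu>0$ and $\psi(\nu)=-\int_{-\infty}^\nu\rho$ for $\nu<0$, so that $\psi'=\delta_0-\rho$ and $\hat\psi(t)=\tfrac{1-\hat\rho(t)}{it}$ is smooth and vanishes on $|t|<\tfrac12\inj(M,g)$ by \eqref{E:Rho Def}. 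Thus only the singularities of $t\mapsto U(t,x,y)$ with $|t|\ge\tfrac12\inj(M,g)$ contribute, and these lie at $\pm(\text{lengths of geodesic arcs from }x\text{ to }y)$. Shrinking the neighborhood so that $\dist(x,y)<\tfrac14\inj(M,g)$ on it, the minimizing arc (of length $\dist(x,y)$) falls where $\hat\psi\equiv0$; moreover one checks that the corresponding branch of the parametrix of \S\ref{S:Parametrix} carries, after the $t$-integration, the amplitude factor $\hat\psi\bigl(\langle\exp_y^{-1}(x),\w\rangle_{g_y}\bigr)$, which vanishes because $|\langle\exp_y^{-1}(x),\w\rangle_{g_y}|\le\dist(x,y)<\tfrac12\inj$. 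Hence the whole contribution comes from the nonminimal arcs, whose lengths cluster, as $x,y\to x_0$, on the loop set $\mathcal L_{x_0}$, and the hypothesis $|\mathcal L_{x_0}|=0$ is what makes this contribution negligible.

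Fix a large parameter $T=T(\ep)$ (pinned down below) and split $\hat\psi=b_{\le T}+b_{>T}$ with $b_{\le T}=\chi(t/T)\hat\psi(t)\in C_c^\infty$, supported in $\tfrac12\inj\le|t|\le 2T$, and $\chi\equiv1$ on $[-1,1]$. For the finite-time term $\tfrac1{2\pi}\int b_{\le T}(t)e^{i\leb t}U(t,x,y)\,dt$ I would use a parametrix for $U(t)$ valid on $|t|\le 2T$ (obtained by propagating the short-time parametrix of \S\ref{S:Parametrix} along geodesics, or as in \cite{Saf}) to write it as a finite sum, over the nonminimal geodesic arcs $\gamma$ from $x$ to $y$ of length $\le2T$, of oscillatory integrals of the schematic form $\leb^{n-1}\int_{S^*_yM}e^{i\leb\Theta_\gamma(x,y,\w)}a_\gamma(x,y,\w,\leb^{-1})\,d\w$, where $\Theta_\gamma(x_0,x_0,\cdot)$ is stationary exactly along the directions of geodesic loops at $x_0$. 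Splitting $S^*_yM$ into the region $\{|d_\w\Theta_\gamma|<\kappa\}$ and its complement, stationary and nonstationary phase bound each integral by $C\,\leb^{n-1}$ times the $g$-measure of that region, plus $C_\kappa\,\leb^{n-1-N}$. As $(x,y)\to(x_0,x_0)$ these near-critical sets shrink into a neighborhood of $\mathcal L_{x_0}\cap\{\text{loops of length}\le2T\}$, a null set; the essential point — which is the substance of the analysis of Safarov \cite{Saf} and Sogge--Zelditch \cite{SZ} — is that, by uniform continuity of $d_\w\Theta_\gamma$ in $(x,y)$ together with the regularity of Lebesgue measure, one may first choose $\kappa$ and then a neighborhood of $x_0$, depending only on $\ep$, so that the total measure of these sets summed over the finitely many arcs is $<\ep/C(T)$. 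Taking this to be $\mathcal U_\ep$, the finite-time term is $\le\ep\,\leb^{n-1}+c_\ep\,\leb^{n-2}$.

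For the infinite-time tail, note that $b_{>T}(t)=\tfrac{1-\chi(t/T)}{it}$, so its inverse Fourier transform $\eta_T$ satisfies $\eta_T'=\delta_0-T\check\chi(T\,\cdot)$; hence $\eta_T$ has a unit jump at $0$ but decays rapidly at scale $1/T$: $|\eta_T(\nu)|\le C_N(1+T|\nu|)^{-N}$ for every $N$. The tail term is $\sum_j\eta_T(\leb-\leb_j)\varphi_j(x)\varphi_j(y)$, which I would estimate in absolute value using $|ab|\le\tfrac12(a^2+b^2)$ and the local near-cluster bound
\[
E_{\leb+h}(x,x)-E_{\leb-h}(x,x)\ \le\ c\ep\,\leb^{n-1}+c_\ep\,\leb^{n-2}\qquad(0\le h\le\ep,\ x\in\mathcal U_\ep),
\]
which follows from the diagonal case of the proposition — essentially the theorem of Safarov \cite{Saf} and Sogge--Zelditch \cite{SZ} that $R(x,x,\leb)=o(\leb^{n-1})$ near a non self-focal point, established from the same finite-time loop analysis together with the monotonicity of $E_\bullet(x,x)$ and a Tauberian theorem — combined with Proposition \ref{P:Smoothed Est 1} and the elementary estimate $\rho*E_{\leb+h}(x,x)-\rho*E_{\leb-h}(x,x)=O(h\leb^{n-1}+\leb^{n-2})$. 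Writing $\sum_j|\eta_T(\leb-\leb_j)|\varphi_j(x)^2$ dyadically in $|\leb-\leb_j|$ and using that a spectral window of width $w$ near $\leb$ carries mass $\le C\max(w,\ep)\leb^{n-1}+c_\ep\leb^{n-2}$ (from the near-cluster bound and Hörmander's uniform bound), the rapid decay of $\eta_T$ and the choice $T\ge1/\ep$ give $\sum_j|\eta_T(\leb-\leb_j)|\varphi_j(x)^2\le c\ep\,\leb^{n-1}+c_\ep\,\leb^{n-2}$, and similarly at $y$; the Cauchy--Schwarz-type bound then controls the tail term by $c\ep\,\leb^{n-1}+c_\ep\,\leb^{n-2}$. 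Combining this with the finite-time bound and fixing $T=T(\ep)$ then $\mathcal U_\ep$ proves the proposition.

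I expect the one genuine difficulty to be the uniformity of the finite-time estimate over the shrinking family $\mathcal U_\ep$: one must control, for $(x,y)$ merely \emph{near} $x_0$, the measure of the directions along which a geodesic arc of bounded length nearly refocuses $y$ onto $x$, and this measure can fail to be small even though $\mathcal L_{x_0}$ itself is null, since the loop set is not continuous in the base point. Making this precise is exactly where one needs the upper-semicontinuity properties of the loop set worked out in \cite{Saf, SZ}; once that is available, the parametrix manipulations, the stationary-phase estimates, and the dyadic handling of the tail are routine.
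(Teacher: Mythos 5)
Your overall decomposition --- kill the short-time singularity using that $\hat\rho\equiv 1$ there, treat times up to $T=1/\ep$ using the non self-focal hypothesis, and control the $|t|>T$ tail by rapid decay on the spectral side plus diagonal cluster bounds --- is sound, and you correctly identify the crux: the uniformity, over $(x,y)$ near $x_0$, of the measure of directions admitting geodesics from $x$ to $y$ of bounded length. That is exactly the upper-semicontinuity input from \cite{SZ} which the paper packages into Lemma \ref{P:Microlocal Identity}. The paper's implementation is genuinely different: it never builds a long-time parametrix and never does stationary phase over geodesic arcs. It splits $\psi_\ep^2=B_\ep+C_\ep$ microlocally, with $C_\ep$ supported off an $\ep$-measure set of directions containing all loops of length $\le 1/\ep$. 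Then (i) $U(t)C_\ep^*$ is \emph{smoothing} for $\tfrac12\inj(M,g)<|t|<1/\ep$ by a pure wave-front-set argument, so $EC_\ep^*-\rho*EC_\ep^*$ is handled by the Tauberian theorem for monotone functions (Lemma \ref{L: FTT 1}), monotonicity being arranged by adding the diagonal terms in \eqref{alpha}--\eqref{beta}; the $\ep\leb^{n-1}$ loss is the Tauberian price of resolving the spectrum only at time scale $1/\ep$, i.e.\ your tail term in disguise. (ii) The $B_\ep$ piece is bounded by Cauchy--Schwarz against the diagonal plus $\|b_0\|_1<\ep$, fed into the Tauberian theorem for non-monotone functions (Lemma \ref{L: FTT 2}) using that $\widehat{g_\ep}$ vanishes near $t=0$. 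Your use of the diagonal estimates as a black box is consistent with the paper, which likewise imports them from \cite{SZ} via Remark \ref{remark R_Q}.

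The step I would push back on is your finite-time estimate as written. Representing $\int b_{\le T}(t)e^{i\leb t}U(t,x,y)\,dt$ as a finite sum over geodesic arcs of length $\le 2T$ of oscillatory integrals $\leb^{n-1}\int_{S_y^*M}e^{i\leb\Theta_\gamma}a_\gamma\,d\w$ with uniformly bounded amplitudes is not available in general: the collection of arcs need not be finite or locally uniform, and near conjugate points and caustics the arcwise amplitudes degenerate, so the bound ``$C(T)\leb^{n-1}$ times the measure of the near-critical set'' does not follow from stationary phase alone. This is precisely what the $B_\ep/C_\ep$ decomposition is designed to avoid: the good directions are disposed of by a smoothing statement requiring no parametrix, and the bad $\ep$-measure directions are never subjected to stationary phase at all, only to $L^2$/diagonal bounds. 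To make your finite-time part rigorous you would need the global FIO machinery of \cite{Saf} or \cite{LSV} with the attendant Maslov bookkeeping, or you should swap it out for the microlocal-plus-Tauberian argument; the rest of your plan (the tail, the vanishing of the minimizing-arc contribution, the bootstrapping from the diagonal) goes through.
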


The assumption that $x,y$ are near a non self-focal point $x_0$ guarantees that the dominant contribution to $E_\leb(x,y)$ comes from the singularity of $U(t,x,y)$ at $t=\dist(x,y)$. Following the technique in \cite{SZ}, we prove Proposition \ref{P:Smoothed Est 2} in \S\ref{S Est 2} by microlocalizing $U(t)$ near $x_0$ (see \S \ref{S:Microlocal Identity}) and applying two Tauberian-type theorems (presented in \S \ref{S:Tauberian}). Relation \eqref{E:Main Goal},  and consequently Theorem \ref{close to diagonal},  are a direct consequence of combining Proposition \ref{P:Smoothed Est 1}  with Proposition \ref{P:Smoothed Est 2}.

\section{Parametrix for the Half-Wave Group}\label{S:Parametrix}
The half-wave group is the one parameter family of unitary operators $U(t)=e^{-it\sqrt{\Delta_g}}$ acting on $L^2(M,g).$ It solves the initial value problem
$$\Big(\frac{1}{i}\partial_t  + \sqrt{\Delta_g}\Big) U(t) =0, \qquad U(0)=Id,$$
and its Schwartz kernel $U(t,x,y)$ is related to the kernel of the spectral projector $E_\leb(x,y)$ via \eqref{E:WKM}. It is well-known (cf \cite{DG, Hor 1-4}) that $U$ is a Fourier integral operator in $I^{-1/4}(\R \x M, M ; \Gamma)$ associated to the canonical relation 
\begin{equation}
 \Gamma= \Big\{ (t, \tau,x, \eta, y,\xi)\in T^*(\R \x M \x M)|\;\; \tau=-|\xi|_{g_y}, \quad G^{t}(y,\xi)=(x,\eta)\Big\},\label{E:Canonical Relation}
\end{equation}
where $G^t$ denotes geodesic flow. 

Our goal in this section is to construct a short time parametrix for $U(t)$ that is similar to H\"ormander's parametrix (cf \cite{Hor}, \cite[\S 29]{Hor 1-4}) but uses the coordinate independent phase function $\phi:\R \times M \times T^*M \to \R$ defined in \eqref{E: phi}. Such a parametrix was used by Zelditch in \cite{Zel2}, where a detailed construction was omitted.  To construct the amplitude we follow Laptev-Safarov-Vassiliev \cite{LSV} who give a detailed treatment of FIOs that are built using global phase functions such as $\phi$. Denote by $\chi \in C^\infty([0, +\infty), [0,1])$ a compactly supported smooth cut-off function with
$$\text{supp} \,\chi \subset [0, \inj(M,g)) \qquad \quad\text{and} \qquad \quad  \chi (s)=1   \;\;\; \text{for}\;\;\; s \in [0, \inj (M,g)/2).$$ 
Further, following \cite{Ber} and \cite[Proposition C.III.2]{BGM}, define
\begin{equation}
\Theta(x,y):=|\text{det}_g D_{\exp_x^{-1}(y)}\exp_x|.\label{E:Theta Def}
\end{equation}
The subscript $g$ means that we use the inner products on $T_{\exp_x^{-1}(y)}(T_xM)$ and $T_y^*M$ induced from $g,$ and as explained in \cite{BGM}, $\Theta(x,y)=\sqrt{|g_x|}$ in normal coordinates at $y.$ The main result of this section is the following. 

\begin{proposition}\label{P:Parametrix}
For $\abs{t}<\inj(M,g)$  we have
\begin{equation}\label{E:Parametrix}
U(t,x,y)= \frac{\chi(\dist(x,y))}{(2\pi)^n\Theta(x,y)^{\frac{1}{2}}} \int_{T_y^*M}e^{i\phi(t,x,y,\xi)}A(t,y,\xi) \frac{d\xi}{\sqrt{|g_y|}},
\end{equation}
where the equality is modulo smoothing kernels. The amplitude $A$, which is an order $0$ polyhomogeneous symbol, is uniquely determined by $\phi$ modulo $S^{-\infty}$ and satisfies: 
\begin{itemize}
\item For all $y \in M$ and $\xi \in T_y^*M$, 
\begin{equation}\label{E: A on diagonal} 
A(0,y,\xi)=1.
\end{equation}    

\item For $\abs{t}<\inj(M,g)$ and all $(y,\xi)\in T_y^*M,$ we have 
\begin{equation}\label{E: A off diagonal}
A(t,y,\xi)-1\in S^{-1}.
\end{equation}
\end{itemize}
\end{proposition}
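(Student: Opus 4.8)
The plan is to build the amplitude $A$ by solving a hierarchy of transport equations within the framework of Laptev--Safarov--Vassiliev \cite{LSV} for Fourier integral operators with \emph{global} phase functions, which is needed precisely because the geometric phase $\phi$ of \eqref{E: phi} parametrizes the canonical relation $\Gamma$ of \eqref{E:Canonical Relation} but does not solve the eikonal equation away from the diagonal. First I would verify that $\phi$ is a nondegenerate phase function whose Lagrangian is $\Gamma$ on the region $\dist(x,y)<\inj(M,g)$: one has $d_\xi\phi(t,x,y,\xi)=0$ exactly when $\exp_y^{-1}(x)=t\,\xi/\abs{\xi}_{g_y}$, i.e. when $G^{t}(y,\xi)=(x,d_x\phi)$, and there $d_t\phi=-\abs{\xi}_{g_y}$; since $D_x\exp_y^{-1}$ is invertible for $\dist(x,y)<\inj(M,g)$, the map $d_\xi\phi$ has $0$ as a regular value and the claim follows. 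The role of the prefactor $\Theta(x,y)^{-1/2}$ is to turn the natural half-density parametrix for $U(t)$ into one acting on functions; as remarked after \eqref{E:Theta Def}, with the phase $\phi$ the half-density Jacobian $\sqrt{\det\phi_{x\xi}}$ that enters this conversion is independent of $t$ and $\xi$, so it does not alter the transport equations for $A$.

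Next I would substitute the ansatz \eqref{E:Parametrix} into $(\tfrac1i\partial_t+\sqrt{\Delta_g})U=0$ and expand with the symbol calculus for $\sqrt{\Delta_g}\in\Psi^1(M)$, writing $A\sim\sum_{k\ge0}A_{-k}$ with $A_{-k}\in S^{-k}_{hom}$. The obstruction is the top-order term $\mathcal E\,A_0\,e^{i\phi}$, where $\mathcal E:=\partial_t\phi+\abs{\nabla_x\phi}_{g_x}$ is homogeneous of degree $1$ in $\xi$ and vanishes precisely on $\{d_\xi\phi=0\}$ (there $\nabla_x\phi$ is the parallel transport of $\xi$, of norm $\abs{\xi}_{g_y}$, while $\partial_t\phi=-\abs{\xi}_{g_y}$). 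By Hadamard's lemma, $\mathcal E=\inprod{b}{d_\xi\phi}$ for a smooth $b(t,x,y,\xi)$ homogeneous of degree $1$; integrating by parts in $\xi$, $\int\mathcal E\,A\,e^{i\phi}\,d\xi=-\tfrac1i\int e^{i\phi}\operatorname{div}_\xi(bA)\,d\xi$, which lowers the order by one and feeds $\mathcal E$ into the lower-order equations. Collecting the terms homogeneous of degree $0$ then produces a linear first-order ODE in $t$ for $A_0$ whose coefficients involve $\operatorname{div}_\xi b$, the subprincipal symbol of $\sqrt{\Delta_g}$, the Hessian of $\phi$, and the normalization $\Theta^{-1/2}$.

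The heart of the matter is that the zeroth-order coefficient in this ODE vanishes identically, so $A_0$ is independent of $t$ and the condition $U(0)=\mathrm{Id}$ forces $A_0\equiv1$; this gives \eqref{E: A on diagonal} and, since then $A-1=\sum_{k\ge1}A_{-k}\in S^{-1}$, also \eqref{E: A off diagonal}. I would establish the cancellation by comparison with the classical half-density parametrix built from an exact eikonal solution, for which the transport equation for the principal symbol has no zeroth-order term \cite{DG} (because $\sqrt{\Delta_g}$ has vanishing subprincipal symbol on half-densities and the geodesic flow preserves the canonical density on $\Gamma$); the $(t,\xi)$-independence of $\sqrt{\det\phi_{x\xi}}$ from the first paragraph guarantees that passing to the phase $\phi$ and to the scalar normalization $\Theta^{-1/2}$ does not reintroduce one. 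The remaining symbols $A_{-k}$, $k\ge1$, are then obtained recursively from inhomogeneous transport ODEs with initial data $A_{-k}(0,\cdot)=0$; each is homogeneous of degree $-k$, and a Borel summation yields $A\in S^0$, determined by $\phi$ modulo $S^{-\infty}$ by uniqueness of solutions of the transport ODEs.

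Finally I would check that \eqref{E:Parametrix} is a genuine parametrix. With the $A_{-k}$ chosen as above, the right-hand side of \eqref{E:Parametrix} solves the half-wave equation modulo $C^\infty$ and, at $t=0$, equals the Schwartz kernel of the identity modulo $C^\infty$ (using $A(0,\cdot)=1$, $\Theta(y,y)=1$, and $\tfrac{1}{(2\pi)^n}\int e^{i\inprod{v}{\xi}}\,d\xi=\delta(v)$ in normal coordinates at $y$); by uniqueness for the hyperbolic initial value problem the difference with $U(t,x,y)$ is smoothing for $\abs{t}<\inj(M,g)$. Differentiating the cutoff $\chi(\dist(x,y))$ only produces terms supported where $\dist(x,y)$ is bounded away from $0$, on which (for the relevant range of $t$) the phase is non-stationary in $\xi$, so these contributions are smoothing as well. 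I expect the main obstacle to be the cancellation in the third paragraph: showing within the LSV global-phase-function framework that the eikonal error carried by the non-solution phase $\phi$, the subprincipal symbol of $\sqrt{\Delta_g}$, and the geometric normalization combine to leave $A_0\equiv1$; a secondary nuisance is the bookkeeping of orders in the integration-by-parts reduction of $\mathcal E$ through the successive transport equations.
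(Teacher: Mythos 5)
Your proposal is correct and follows essentially the same route as the paper: verify that $\phi$ parametrizes $\Gamma$, work first with the half-density kernel so that the factor $\sqrt{|\det d_{x,\xi}\phi|}=\Theta(x,y)^{-1/2}$ (independent of $t$ and $\xi$) converts to the scalar kernel, use the $t$-independence of the principal symbol of $e^{-it\sqrt{\Delta_g}}$ on half-densities to get $A_0\equiv 1$, and pin down the lower-order terms at $t=0$ via the oscillatory-integral representation of the identity with this phase. The only difference is one of emphasis: where you re-derive the transport hierarchy by hand (Hadamard's lemma and integration by parts in $\xi$), the paper simply cites Theorems 3.4 and 4.1 of \cite{LSV} for the existence and uniqueness of the $x$-independent amplitude and for the constancy of the principal symbol.
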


There are many choices of amplitude functions in \eqref{E:Parametrix} that depend on $t,x,y,\xi.$ When we write that $A$ is uniquely determined modulo $S^{-\infty}, $ we mean that it is unique among amplitudes that are independent of $x.$ The proof of Proposition \ref{P:Parametrix} is divided into two steps. First, we prove in \S \ref{S:Phase} that $\phi$ parametrizes $\Gamma$. Then, in \S \ref{S:Amplitude} we construct the amplitude $A$.

\subsection{Properties of the phase function}\label{S:Phase}
Throughout this section, we will denote by $\mathcal T_{y\gives x}:T_y^*M\to T_x^*M$ the parallel transport operator (along the unique shortest geodesic from $x$ to $y$) for all $x$ and $y$ sufficiently close. We will use that
\begin{equation}
\mathcal T_{y\gives x} \exp_y^{-1}(x)=-\exp_x^{-1}(y)\qquad \text{and}\qquad \mathcal T_{y\gives x}=\mathcal T_{x\gives y}^*.\label{E:Parallel Transport}
\end{equation}
\begin{Lem}\label{L: eikonal eqn}
The phase function $\phi(t,x,y,\xi)$ parametrizes the canonical relation $\Gamma$ for $\abs{t}<\inj(M,g)$ and $\dist(x,y)<\inj(M,g)/2$ in the
sense that 
\begin{equation}
\Gamma=i_\phi(C_\phi)\label{E:Image}
\end{equation}
is the image of the critical set
$$C_\phi=\Big\{(t,x,y,\xi) \in \R \times M \times T^*M|\;\; x=\exp_y \Big(\frac{t \xi}{|\xi|_{g_y}} \Big) \Big\}$$
under the immersion $i_\phi(t,x,y,\xi)=(t, d_t \phi, x, d_x \phi, y,-d_y \phi).$
\end{Lem}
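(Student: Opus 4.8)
The plan is to verify the two defining properties of a parametrizing phase function in the sense of Laptev–Safarov–Vassiliev \cite{LSV}: first, that $\phi$ is a nondegenerate phase function on $\R\times M\times M$ with fiber variable $\xi\in T_y^*M$, i.e. that $d_{\xi}\phi=0$ cuts out a manifold $C_\phi$ of the expected dimension on which the differentials $d_{(t,x,y)}(d_{\xi_k}\phi)$ are linearly independent; and second, that the map $i_\phi$ sends $C_\phi$ onto the canonical relation $\Gamma$ of \eqref{E:Canonical Relation}. I would begin by computing $d_\xi\phi$. Writing $v=\exp_y^{-1}(x)\in T_yM$, we have $\phi = \langle v,\xi\rangle_{g_y} - t|\xi|_{g_y}$, so $d_\xi\phi = v - t\,\xi/|\xi|_{g_y}$ (using that the differential of $\xi\mapsto|\xi|_{g_y}$ is $\xi\mapsto \xi/|\xi|_{g_y}$, identifying $T_y^*M$ with its dual via $g_y$). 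Hence $d_\xi\phi=0$ is equivalent to $\exp_y^{-1}(x) = t\,\xi/|\xi|_{g_y}$, which for $\dist(x,y)<\inj(M,g)/2$ and $|t|<\inj(M,g)$ is exactly the condition $x = \exp_y\!\big(t\xi/|\xi|_{g_y}\big)$ defining $C_\phi$. This gives \eqref{E:Image} as a set-theoretic statement once the rest is checked; note $C_\phi$ forces $|t|=\dist(x,y)$ up to the sign of $t$, so it is a manifold of dimension $1+n+n$ (parametrized by $y$, $\xi$, and the sign/value of $t$), matching $\dim\Gamma$.

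Next I would check the immersion/nondegeneracy and identify the image. On $C_\phi$, compute $d_t\phi = -|\xi|_{g_y}$, which matches the constraint $\tau=-|\xi|_{g_y}$ in $\Gamma$. For the $x$-derivative: $d_x\phi = (D_x\exp_y^{-1})^{*}\xi$, and since on $C_\phi$ the point $x=\exp_y(t\xi/|\xi|_{g_y})$ lies along the geodesic from $y$ in direction $\xi$, the covector $d_x\phi$ is precisely the cotangent lift $\eta$ of $\xi$ under geodesic flow, i.e. $(x,d_x\phi)=G^t(y,\xi)$; this is the standard fact that the geodesic flow preserves the momentum and that $\exp_y^{-1}$ is the time-$(-t)$ map composed with normalization. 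For the $y$-derivative one uses \eqref{E:Parallel Transport}: differentiating $\phi$ in $y$ and evaluating on $C_\phi$ should give $-d_y\phi = \xi$ up to the identification, so that $(y,-d_y\phi)=(y,\xi)$. Assembling these, $i_\phi(t,x,y,\xi) = (t,-|\xi|_{g_y}, x, \eta, y, \xi)$ with $G^t(y,\xi)=(x,\eta)$, which is exactly $\Gamma$. Nondegeneracy of $\phi$ then follows because $i_\phi$ restricted to $C_\phi$ is injective with the expected rank — or, more cleanly, because $\Gamma$ is a genuine canonical relation (it is the graph of the lifted geodesic flow, a symplectomorphism) and any phase whose critical set maps bijectively onto a Lagrangian of the right dimension with the $\tau=d_t\phi$ identity is automatically nondegenerate by the general theory in \cite{Hor 1-4}.

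The main obstacle I expect is the careful identification of $d_x\phi$ and $d_y\phi$ on $C_\phi$ with the correct cotangent data, since these require differentiating $\exp_y^{-1}(x)$ jointly in both arguments and then contracting with $\xi$; the Jacobi-field / Gauss-lemma bookkeeping here is where sign errors and metric-identification subtleties enter, and where the relations \eqref{E:Parallel Transport} do the real work (in particular $\mathcal T_{y\to x}\exp_y^{-1}(x) = -\exp_x^{-1}(y)$ is what makes the symmetry in $x,y$ and the correct sign of $d_y\phi$ transparent). A clean way to organize this is to use normal coordinates at $y$, in which $\exp_y^{-1}(x)$ is just the coordinate vector of $x$ and $g_y$ is the identity, reducing $d_x\phi$ to the Euclidean computation and leaving only the $y$-dependence (through the frame) to track via parallel transport; then restore invariance at the end. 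Once the pointwise identities are in hand, the manifold/dimension count and the nondegeneracy are formal.
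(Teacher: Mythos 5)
Your proposal is correct and follows essentially the same route as the paper: a direct computation of $d_t\phi$, $d_x\phi$, $d_y\phi$ on $C_\phi$ (the paper splits into the cases $t=0$ and $t\neq 0$, handling the latter via $d_x\dist(x,y)=-\exp_x^{-1}(y)/\dist(x,y)$ and normal coordinates at $y$, exactly the Gauss-lemma/parallel-transport bookkeeping you flag as the main obstacle). The only additions on your side — verifying $d_\xi\phi=0$ recovers the stated $C_\phi$ and the nondegeneracy discussion — are harmless and consistent with the paper's argument.
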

\begin{proof}
When $\abs{t}<\inj(M,g),$ we have that $(t,x,y,\xi)\in C_\phi$ if and only if $t=0$ and $x=y$, or 
 \[t=\dist(x,y)\neq 0\qquad \text{and}\qquad \xi/\abs{\xi}_{g_y} = \exp_y^{-1}(x)/\dist(x,y).\] 
To prove \eqref{E:Image} when $t=0,$ we must show that 
\begin{equation}
i_\phi(0,x,x,\xi)=\set{(0,-\abs{\xi}_{g_x}, x,\xi, x, \xi),~~ \xi \in T_x^*M}=\Gamma|_{t=0}.\label{E:Param Goal 1}
\end{equation}
Since $d_x|_{x=y}\exp_y^{-1}(x)$ is the identity on $T_y^*M,$
\[d_x|_{x=y}\phi(0,x,y,\xi)=\xi.\]
Next, using \eqref{E:Parallel Transport}, we have 
\[\phi(0,x,y,\xi)=\inprod{-\exp_x^{-1}(y)}{\mathcal T_{y\gives x} \xi}_{g_x}.\]
Therefore, 
\[d_y|_{y=x}\phi(0,x,y,\xi)=-\xi,\]
which proves \eqref{E:Param Goal 1}. To establish \eqref{E:Image} when $t\neq 0,$ we write  
\begin{align}\label{E:dphi}
 \dell_{x_k}\phi(t,x,y,\xi) = \sum_{i,j} g^{ij}(y) \dell_{x_k} \left[\exp_{y}^{-1}(x)\right]_i \xi_j\, ,\qquad k=1,\ldots, n.
\end{align}
Since $d_x \dist(x,y)= -\exp_x^{-1}(y)/\dist(x,y)$, evaluating \eqref{E:dphi} at \[\xi=\abs{\xi}_{g_y}\exp_y^{-1}(x)/\dist(x,y),\] we obtain
\begin{equation}
d_x\phi(t,x,y,\xi)=\frac{\abs{\xi}_{g_y}}{2 \dist(x,y)} d_x \left[\dist(x,y)^2\right] = \abs{\xi}_{g_y}d_x \dist(x,y)=-\abs{\xi}_{g_y}\frac{\exp_x^{-1}(y)}{\dist(x,y)}.\label{E:Dx phi}
\end{equation}
Since $G^t(y,\exp_y^{-1}(x))=(x,-\exp_x^{-1}(y)),$ it remains to check that 
\[-d_y\phi(t,x,y,\xi)=\abs{\xi}_{g_y}\frac{\exp_y^{-1}(x)}{\dist(x,y)},\]
which we verify in normal coordinate at $y.$ We have that 
\[d_z|_{z=y}\abs{\xi}_z=0\qquad \text{and}\qquad \dell_{z_k}|_{z=y} \lr{\exp_z^{-1}(x)}_j=-\delta_{kj}.\] 
Thus, 
\begin{equation*}
\dell_{z_k}|_{z=y}\phi(t,x,z,\xi)=-  \xi_k.
\end{equation*}
Evaluating at $\xi=\abs{\xi} \cdot x/\abs{x},$ we find that
\begin{equation*}
-d_y\phi(t,x,y,\xi)=\abs{\xi}\cdot \frac{x}{\abs{x}}=\abs{\xi}_{g_y}\frac{\exp_y^{-1}(x)}{\dist(x,y)},
\end{equation*}
as desired. 
\end{proof}

We need one more Lemma before constructing the amplitude $A$ in Proposition \ref{P:Parametrix}.

\begin{Lem}\label{L:Delta Ker}
Let $\beta:M\x M \gives \R$ be any smooth function such that $\beta(x,x)=1.$ The kernel of the identity operator acting on functions relative to the Riemannian volume form $\sqrt{\abs{g_y}}dy$ admits the following representation as an oscillatory integral:
\begin{align}
\delta(x,y)&=\frac{\chi(\dist(x,y)) }{(2\pi)^n}\beta(x,y)\int_{T_x^*M} e^{-i\langle\exp_x^{-1}(y),\eta \rangle_{g_x}}\frac{d\eta}{\sqrt{|g_x|}}\label{E:Delta Ker}\\
\notag &=\frac{\chi(\dist(x,y))}{(2\pi)^n} \beta(x,y)\int_{T_y^*M} e^{i\langle\exp_y^{-1}(x), \xi\rangle_{g_y}}\frac{d\xi}{\sqrt{|g_y|}}.
\end{align}
\end{Lem}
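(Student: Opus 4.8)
The plan is to verify the identity directly, by pairing both sides of \eqref{E:Delta Ker} against a test function and reducing to the Fourier inversion formula in normal coordinates; no hard analysis is involved, only care with the non-absolutely-convergent oscillatory integral.

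\emph{First I would check that the two oscillatory integrals in \eqref{E:Delta Ker} coincide.} Parallel transport $\mathcal T_{x\gives y}\colon T_x^*M\to T_y^*M$ is a linear isometry, so it carries the canonical density $d\eta/\sqrt{|g_x|}$ on $T_x^*M$ to the canonical density $d\xi/\sqrt{|g_y|}$ on $T_y^*M$, and by \eqref{E:Parallel Transport} it sends $\exp_x^{-1}(y)$ to $-\exp_y^{-1}(x)$. Hence the substitution $\xi=\mathcal T_{x\gives y}\eta$ turns $-\langle\exp_x^{-1}(y),\eta\rangle_{g_x}$ into $\langle\exp_y^{-1}(x),\xi\rangle_{g_y}$ and leaves $\chi(\dist(x,y))\beta(x,y)$ untouched, which is precisely the asserted equality between the two lines. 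It therefore suffices to establish the first line.

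\emph{Next I would reduce the first line to Euclidean Fourier inversion.} Fix $x\in M$ and let $f\in C^\infty(M)$; the goal is to show that integrating the right-hand side of \eqref{E:Delta Ker} against $f(y)\sqrt{|g_y|}\,dy$ returns $f(x)$. Since $\chi(\dist(x,y))$ vanishes unless $y$ lies within the injectivity radius of $x$, one may substitute $y=\exp_x(v)$ with $v\in T_xM$, $|v|_{g_x}<\inj(M,g)$. Fixing an orthonormal basis of $T_xM$ to identify $T_xM\cong T_x^*M\cong\R^n$, one has $\exp_x^{-1}(y)=v$, $\langle v,\eta\rangle_{g_x}=\langle v,\eta\rangle$, $\sqrt{|g_x|}=1$, and $\sqrt{|g_y|}\,dy$ pulls back to $\sqrt{|g(v)|}\,dv$, where $g(v)$ is the metric in these normal coordinates and $\sqrt{|g(0)|}=1$. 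Setting
\[
F(v):=\chi(|v|)\,\beta\bigl(x,\exp_x(v)\bigr)\,f\bigl(\exp_x(v)\bigr)\,\sqrt{|g(v)|},
\]
a compactly supported smooth function on $\R^n$ with $F(0)=\chi(0)\,\beta(x,x)\,f(x)=f(x)$, the pairing becomes
\[
\frac{1}{(2\pi)^n}\int_{\R^n}\left(\int_{\R^n} e^{-i\langle v,\eta\rangle}\,d\eta\right) F(v)\,dv,
\]
which equals $F(0)=f(x)$ by Fourier inversion, proving \eqref{E:Delta Ker}.

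\emph{The one point requiring care} is that the inner $\eta$-integral is not absolutely convergent, so the interchange of the $\eta$- and $v$-integrations above must be justified; this is the main (and only) obstacle, and it is routine. I would insert a cutoff $\psi(\ep\eta)$ with $\psi\in\mathcal S(\R^n)$ and $\psi(0)=1$, apply Fubini, use $\int_{\R^n}e^{-i\langle v,\eta\rangle}\psi(\ep\eta)\,d\eta=\ep^{-n}\widetilde\psi(v/\ep)$ where $\widetilde\psi$ is the Fourier transform of $\psi$ (so $\int_{\R^n}\widetilde\psi=(2\pi)^n$), and let $\ep\to0$: since $\ep^{-n}\widetilde\psi(\,\cdot/\ep)$ is an approximate identity of total mass $(2\pi)^n$ and $F$ is continuous, the limit is $(2\pi)^nF(0)$. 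Everything else is bookkeeping translating the coordinate-free objects into their normal-coordinate representatives; the only geometric inputs are $\chi(0)=1$, $\sqrt{|g(0)|}=1$ and $\beta(x,x)=1$, which is exactly why $\beta$ may be taken to be an arbitrary smooth function satisfying $\beta(x,x)=1$.
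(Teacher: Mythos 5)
Your proof is correct and takes essentially the same approach as the paper's: pairing against a test function in normal coordinates at $x$ and invoking Fourier inversion (using $\chi(0)=1$, $\beta(x,x)=1$, $\sqrt{|g(0)|}=1$), together with the parallel-transport isometry and \eqref{E:Parallel Transport} to identify the two oscillatory integrals over $T_x^*M$ and $T_y^*M$. The regularization of the non-absolutely-convergent $\eta$-integral that you spell out is standard and is left implicit in the paper.
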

\begin{proof}
Fix $x\in M$ and let $f\in C^\infty(M).$ Without loss of generality, assume that $f$ is supported in an open set $U \subset B(x, \inj(M,g))$ that contains the point $x$. Set $V= \exp_x^{-1}(U) \subset \R^n$ and consider normal coordinates at $x$:
\begin{equation}\label{E: normal coord}
h:V \to U, \qquad h(z)= \exp_x(z).
\end{equation}
The pairing of the RHS  of \eqref{E:Delta Ker}  with $f$ is then
\begin{align*}
\frac{1}{(2\pi)^n}\int_{\R^n}\int_{\R^n} e^{-i\inprod{z}{\eta}}\chi(|z|)f(h(z))\beta(0,z)\sqrt{|g_{h(z)}|} dz d\eta
&=\chi(|0|)\,f(h(0))\,\sqrt{|g_{h(0)}|}\beta(0,0)\\
&=f(x).
\end{align*}
This proves \eqref{E:Delta Ker}. To explain why the two oscillatory integrals in the statement of the present Lemma define the same distribution, we will use the parallel transport operator (see \eqref{E:Parallel Transport}). We write \eqref{E:Delta Ker} as
\begin{equation}
\frac{\chi(\dist(x,y))}{(2\pi)^n} \beta(x,y)\int_{T_x^*M} e^{i\inprod{\exp_y^{-1}(x)}{\mathcal T_{y\gives x}\eta}_{g_y}}\frac{d\eta}{\sqrt{|g_x|}}\label{E:Delta Ker 1}
\end{equation}
Let $(y^1,\ldots, y^n)$ be any local coordinates near $x$. We note that for every $y,$ the collection of covectors $\set{g_y^{1/2}dy^j|_y}_{j=1}^n$ is an orthonormal basis for $T_y^*M.$ Hence, the Lebesgue measure on $T_y^*M$ in our coordinates is $\abs{g_y}^{1/2}dy^1|_y\wedge\cdots\wedge dy^n|_y,$ and since $\mathcal T_{y\gives x}$ is an isometry, 
\[\text{if}\quad \xi = \mathcal T_{y\gives x}\eta, \qquad \text{then} \qquad d\xi =\frac{\abs{g_y}^{1/2}}{\abs{g_x}^{1/2}} d\eta.\]
This allows us to change variables in \eqref{E:Delta Ker 1} to obtain the integral over $T_y^*M$ in the statement of the Lemma. 
\end{proof}

\subsection{Construction of the amplitude}\label{S:Amplitude}
To construct the amplitude $A$ in Proposition \ref{P:Parametrix}, let us write $\twiddle{U}(t)$ for the wave operator acting on sections of the half-density bundle $\W^{1/2}(M).$ Lemma \ref{L: eikonal eqn} combined with Theorem 3.4 in \cite{LSV} (or Proposition 25.1.5 in \cite{Hor 1-4}) shows that there exists a polyhomogeneous symbol ${A}$ of order $0$ that is supported in a neighborhood of $C_\phi$ for which
\begin{equation}\label{E: tilde U}
\twiddle{U}(t,x,y)=\frac{\chi(\dist(x,y))}{(2\pi)^n} \int_{T_y^*M}e^{i\phi(t,x,y,\xi)}{A}(t,y,\xi)d_\phi(t,x,y,\xi)d\xi \quad (\text{mod } C^\infty),
\end{equation}
where 
\[d_\phi=\sqrt{|\det d_{x,\xi}\phi|}\in \W_x^{1/2}(M)\otimes \W_y^{-1/2}(M)\] 
is a $\tfrac{1}{2}$-density in $x$ and a $(-\tfrac{1}{2})$-density in $y.$ Since $d\xi$ behaves like a $1$-density in $y$, $\twiddle{U}(t,x,y) \in \W_x^{1/2}(M)\otimes \W_y^{1/2}(M).$ The square root of the Riemannian volume form
\[g_y^{1/4}=\abs{g_y}^{1/4}\abs{dy}^{1/2}\in \W_y^{1/2}(M)\]
identifies $L^2$ global sections $\Gamma\lr{\W^{1/2}(M)}$ with $L^2(M)$ via
\[f(y)\in L^2(M)\quad \mapsto \quad f(y)\cdot g_y^{1/4}\in \Gamma\lr{\W^{1/2}(M)}.\]
Then, computing in normal coordinates at $y,$ we have 
\begin{equation}
d_\phi(t,x,y,\xi){g_y^{1/4}}{g_x^{-1/4}}=\frac{1}{|g_x|^{\frac{1}{4}}}=\frac{1}{\Theta(x,y)^{\frac{1}{2}}}.\label{E:Theta}
\end{equation}
In addition, since $U(t,x,y)=\twiddle{U}(t,x,y) {g_x^{-1/4}g_y^{-1/4}}$,
relation \eqref{E:Theta} gives
\begin{align}\label{E: no tilde U}
U(t,x,y)
=\frac{\chi(\dist(x,y))}{(2\pi)^n \Theta(x,y)^{\frac{1}{2}}}\int_{T_y^*M}e^{i\phi(t,x,y,\xi)}{A}(t,y,\xi)\frac{d\xi}{\sqrt{\abs{g_y}}}  \qquad (\text{mod }C^\infty).
\end{align}
Write ${A}\sim \sum_{j\geq 0} {A}_{-j}$ for the polyhomogeneous expansion of ${A}.$ Note that
 \[{A}_0(t,y,\xi)=1 \qquad \text{for all}\; t\]
because the principal symbol $\twiddle{U}(t)$ is independent of $t$ and equals $1$ at $t=0$ \cite[Theorem 4.1]{LSV}. Next, since 
\[\twiddle{U}(0,x,y) =\frac{\chi(\dist(x,y))}{(2\pi)^n }\int_{T_y^*M}e^{i\phi(t,x,y,\xi)}A(0,y,\xi)d_\phi(t,x,y,\xi)\frac{d\xi}{\sqrt{\abs{g_y}}}\]
is a kernel for the identity modulo $C^\infty$ and $A(0,y,\xi)$ is uniquely determined by $\phi$ mod $S^{-\infty}$ (Theorem 3.4 in \cite{LSV}), it follows from Lemma \ref{L:Delta Ker} and \eqref{E:Theta}, with $\beta(x,y)= \Theta(x,y)^{-\frac{1}{2}}$, that
\[A_{-j}(0,y,\xi)=0 \qquad \text{for all}\; j\geq 1,\]
as desired.

 \section{Microlocalizing the identity operator at non self-focal points }\label{S:Microlocal Identity}
In this section we microlocalize the identity operator near a non self-focal point $x_0$. For every $\ep>0$ we make a microlocal decomposition of the identity $Id=B_\ep + C_\ep$ near $x_0$, where the operator $B_\ep$ is supported on the set of ``bad" loopset directions and it is built so that its support has measure smaller than $\ep$. This construction follows closely that of Sogge-Zelditch in \cite{SZ}.
\begin{lemma}\label{P:Microlocal Identity}
There exists a constant $\gamma>0$ so that for every $\ep>0$ there is a neighborhood $\mathcal O_\ep$ of $x_0$, a function $\psi_\ep \in C^\infty_c(M)$ and real valued operators $B_\ep,C_\ep\in \Psi^0(M)$ supported in $\mathcal \mathcal \mathcal O_\ep$ satisfying the following properties:
\begin{enumerate}
\item For every $\ep,$ $\supp(\psi_\ep) \subset \mathcal O_\ep$ and $\psi_\ep=1$ on a neighborhood of $x_0$.
\item For every $\ep,$
\begin{equation}
B_\ep+C_\ep=\psi_\ep^2.\label{E:Microlocal Identity}
\end{equation}
\item $U(t)C_\ep^*$ is a smoothing operator for $\;\tfrac{1}{2}{\inj(M,g)}<\abs{t}<\frac{1}{\ep}.$
\item Denote by $b_0$ and $c_0$ the principal symbols of $B_\ep$ and $C_\ep$ respectively. Then, for all $x \in M$ we have 
\begin{equation}
  \label{E:Symbol Est 1}
 \frac{1}{\ep}\int_{\abs{\xi}_{g_x}\leq 1} \abs{b_0(x,\xi)}^2d\xi + \int_{\abs{\xi}_{g_x}\leq 1} \abs{c_0(x,\xi)}^2 d\xi \leq \gamma,
\end{equation}
and both $b_0$ and $c_0$ are constant in an open neighborhood of $x_0$.
\end{enumerate}
\end{lemma}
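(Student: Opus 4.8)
The plan is to build the decomposition directly on the unit cosphere bundle over a small neighborhood of $x_0$, using the hypothesis that $x_0$ is non self-focal. First I would fix a compactly supported cutoff $\psi \in C^\infty_c(M)$ equal to $1$ near $x_0$ with support in a ball $B(x_0,\delta_0)$ of radius less than $\tfrac14 \inj(M,g)$; all the operators will be localized by $\psi$, and $\mathcal O_\ep$ will be $\supp \psi$ (possibly shrunk with $\ep$). The key geometric input is that $\mathcal L_{x_0}$ has measure zero in $S^*_{x_0}M$. The \emph{first step} is therefore to choose, for each $\ep>0$, an open set $\Gamma_\ep \subset S^*_{x_0}M$ containing $\mathcal L_{x_0}$ with surface measure at most $\ep$. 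The \emph{second step} is to propagate this choice: since geodesic flow is continuous and the ``return to the starting point in time $\le 1/\ep$'' condition is closed, one shows that for a small enough neighborhood $\mathcal O_\ep$ of $x_0$, every unit covector $(x,\xi)$ with $x \in \mathcal O_\ep$ whose geodesic returns to $\mathcal O_\ep$ within time $\tfrac12\inj(M,g) < |t| < 1/\ep$ has its direction in a slightly enlarged cone still of measure at most, say, $2\ep$ — this is exactly the stability argument used by Sogge--Zelditch in \cite{SZ}. I would then take $b_0(x,\xi)$ to be (a $0$-homogeneous extension of) $\psi_\ep(x)^2$ times a smooth function of $\xi/|\xi|_{g_x}$ that is $1$ on this enlarged bad cone and supported in a further enlargement of measure $\le 3\ep$; set $c_0 := \psi_\ep^2 - b_0$. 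To arrange that $b_0,c_0$ are \emph{constant} near $x_0$ I would additionally require the bad-direction cutoff to be locally constant in the base variable near $x_0$ (possible because $\psi_\ep \equiv 1$ there), so on a small ball $b_0(x,\xi) = \mathbf{1}_{\text{bad cone}}(\xi)$ and $c_0 = 1-b_0$.

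Next I would \emph{quantize}: let $B_\ep, C_\ep \in \Psi^0(M)$ be properly supported pseudodifferential operators with principal symbols $b_0, c_0$, chosen so that $B_\ep + C_\ep = \psi_\ep^2$ \emph{exactly} (not just to leading order) — this is automatic if one first picks $B_\ep$ with symbol $b_0$ and then \emph{defines} $C_\ep := \psi_\ep^2 - B_\ep$, which is a pseudodifferential operator of order $0$ with principal symbol $\psi_\ep^2 - b_0 = c_0$, both supported in $\mathcal O_\ep$, and both real-valued if $b_0$ is real and the quantization is chosen symmetrically. This gives (1) and (2) by construction.

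For part (3), the statement is that $U(t)C_\ep^*$ is smoothing for $\tfrac12\inj(M,g) < |t| < 1/\ep$. By the standard Egorov/propagation-of-singularities description of $U(t)$ as an FIO associated to the canonical relation $\Gamma$ in \eqref{E:Canonical Relation}, $U(t)C_\ep^*$ has wavefront relation contained in the image under geodesic flow of $\mathrm{WF}'(C_\ep^*) \subset \{(x,\xi): x \in \mathcal O_\ep,\ \xi/|\xi|_{g_x} \notin \text{bad cone}\}$; its Schwartz kernel is smooth away from the set of $(t,x,y)$ with $G^t(y,\xi)=(x,\eta)$ for some such $(y,\xi)$. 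Since $C_\ep$ is supported in $\mathcal O_\ep$, a kernel singularity at time $t$ with $|t|>\tfrac12\inj(M,g)$ requires a geodesic of length $|t|$ starting and ending in $\mathcal O_\ep$ with initial direction outside the bad cone — but by the construction in the second step, for $|t|<1/\ep$ \emph{every} such geodesic has initial direction \emph{inside} the bad cone, a contradiction. Hence $U(t)C_\ep^*$ has empty wavefront set, i.e. is smoothing, on that time interval.

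Finally, part (4) is a direct estimate. The two integrals in \eqref{E:Symbol Est 1} are bounded, uniformly in $x$, by the surface measures of the supports of $b_0(x,\cdot)$ and $c_0(x,\cdot)$ on $S^*_xM$ (after integrating in the radial variable over $[0,1]$, which contributes a dimensional constant). By construction $b_0(x,\cdot)$ is supported in a cone of measure $\le 3\ep$, so $\tfrac1\ep \int |b_0|^2 \le \tfrac1\ep \cdot \|b_0\|_\infty^2 \cdot 3\ep \cdot C_n = 3C_n\|\psi_\ep\|_\infty^4$, while $\int |c_0|^2 \le \|\psi_\ep\|_\infty^4 \vol(S^{n-1}) C_n$; both are bounded by a constant $\gamma$ independent of $\ep$. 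The local constancy of $b_0,c_0$ near $x_0$ was arranged in the first step. \emph{The main obstacle} I anticipate is the stability argument in the second step — making precise that the set of directions at nearby base points giving geodesics that return to a small neighborhood within bounded time can be trapped inside an $\ep$-neighborhood of $\mathcal L_{x_0}$; this requires a compactness argument on the finite time interval $[\,\tfrac12\inj(M,g),\,1/\ep\,]$ together with the continuity of the geodesic flow and the fact that $\mathcal L_{x_0}$ is closed, and it is exactly the point where the non self-focal hypothesis is used.
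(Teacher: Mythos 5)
Your proposal is correct and follows essentially the same route as the paper: an $\ep$-small open neighborhood of the bad loop directions, a semicontinuity/compactness argument to propagate the "no return before time $1/\ep$" property from $x_0$ to a neighborhood (the paper phrases this via the upper semicontinuity of $1/\mathcal L^*(x,y,\xi)$ and Lemma 3.1 of \cite{SZ}), quantization of the resulting directional cutoffs localized by $\psi_\ep$, the canonical-relation/wavefront argument for property (3), and the support-measure bound for (4). The only cosmetic difference is that the paper quantizes amplitudes $\tilde\psi_\ep(x)\tilde\psi_\ep(y)\varrho_\ep(\xi/|\xi|)$ depending on both base variables (with $\varrho_\ep$ even in $\xi$ to make the operators real), whereas you fix a left symbol and define $C_\ep:=\psi_\ep^2-B_\ep$; both yield the same conclusions.
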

\begin{proof}
For every $x,y\in M$ and $\xi \in S_x^* M$ define the loopset function
\[\mathcal L^*(x,y,\xi)=\inf \big\{t> 0\;|\;\, \exp_x (t\xi) = y \big\}\]
with $\mathcal L^*(x,y,\xi)=+\infty$ in case the infimum is taken over the empty set. Unlike the loopset function studied in \cite{SZ}, we are interested in $x\neq y$ (but with $\dist(x,y)<\tfrac{1}{2}{\inj(M,g)}).$ 

Fix a coordinate chart $(\kappa_{x_0},\mathcal V_{x_0})$ containing $x_0$ with $\kappa_{x_0}:\mathcal V_{x_0} \subset \R^n \to M$.
We first note that the function $f:\mathcal V_{x_0}\times \mathcal V_{x_0}\times S^{n-1} \to \mathbb R$ defined as $f(x,y,\xi)=1/  \mathcal L^*(x,y,\xi)$ is upper semicontinuous and so by the proof of \cite[Lemma 3.1]{SZ} there exist a neighborhood $\mathcal N_\ep \subset \mathcal V_{x_0}$ of $x_0$ and an open set $\Omega_\ep \subset S^{n-1}$ for which 
\begin{align}
 \mathcal L^*(x,y,\xi)&>\frac{1}{\ep}\qquad \text{in}\;\;\;  \mathcal N_\ep \times \mathcal N_\ep \times \Omega_\ep^c, \label{E:LL Control 1} \\
 \abs{\W_{\ep}} &\leq \ep. \label{E:LL Control 2}
\end{align}
In addition, there exists a function $\varrho_\ep \in C^\infty(S^{n-1}, [0,1])$ satisfying that  $\varrho_\ep\equiv 1$ on $\Omega_\ep$, $\varrho_\ep (\xi)=\varrho(-\xi)$ for all $\xi \in S^{n-1}$, and $|\supp(\varrho_\ep)|<2\ep$. In particular, 
$$ \mathcal L^*(x,y,\xi)>\frac{1}{\ep}\qquad  \text{on}\qquad \mathcal N_\ep \times \mathcal N_\ep \times \supp(1-\varrho_\ep).$$

As in \cite{SZ} we choose a real-valued function $\tilde \psi_\ep \in C^\infty_c(\mathbb R^n)$ with $\supp(\tilde \psi_\ep) \subset \mathcal N_\ep$ and equal to $1$ in a neighborhood of $\kappa_{x_0}^{-1}(x_0)$. Define symbols on $\R^{3n}$ by 
$$\tilde b_\ep(x,y,\xi)= \tilde \psi_\ep(x) \tilde \psi_\ep(y) \varrho_\ep \lr{\tfrac{\xi}{\abs{\xi}}} \qquad \text{and} \qquad \tilde c_\ep(x,y,\xi)= \tilde \psi_\ep(x) \tilde \psi_\ep(y)\Big(1-\varrho_\ep \lr{\tfrac{\xi}{\abs{\xi}}}\Big),$$
and consider their respective quantizations $Op(\tilde b_\ep), Op(\tilde c_\ep) \in \Psi^0(\mathbb R^n)$. Properties (1) and (2) follow from setting 
$$B_\ep:=(\kappa_{x_0}^{-1})^* \,Op(\tilde b_\ep), \quad C_\ep:=(\kappa_{x_0}^{-1})^* \,Op(\tilde c_\ep)$$
and
$$ \mathcal O_\ep=\kappa_{x_0}(\mathcal N_\ep), \quad\quad   \psi_\ep:= (\kappa_{x_0}^{-1})^* \tilde \psi_\ep.$$
Note that if for some time $\tfrac{1}{2}\inj(M,g)<t< \tfrac{1}{\ep}$ we have $\exp_{x}(t \tfrac{\xi}{|\xi|})=y$ for some $x,y \in M$ and $\xi \in T_x^*M$, then $\mathcal L^*(x,y, \tfrac{\xi}{|\xi|}) \leq \frac{1}{\ep}$, and the latter implies $\tilde c_\ep(x,y, {\xi})=0$.  Therefore, we see that if we write $c_\ep$ for the symbol of $C_\ep$, then
\[c_\ep(x,y,\xi)=0 \quad \text{if}\quad \lr{t,x,y;\tau, \xi, \eta}\in \Gamma  \quad \text{with} \quad \tfrac{1}{2}\inj(M,g)<t< \tfrac{1}{\ep},\]
where $\Gamma$ is the canonical relation underlying $U(t)$ (see \ref{E:Canonical Relation}). Thus, the kernel of $U(t)C_\ep^*$ is a smooth function for $\tfrac{1}{2}\inj(M,g)<t< \tfrac{1}{\ep}$ and for $(x,y)$ in $\mathcal O_\ep \times \mathcal O_\ep$ which is precisely statement (3). For all $x\in \mathcal N_\ep$ we have that the principal symbols $b_0,c_0$ satisfy the inequality \eqref{E:Symbol Est 1} since $|\supp \varrho_\ep| < 2\ep$. Also, since $b_\ep$ and $c_\ep$ are real valued and invariant under $\xi \mapsto -\xi$, we have that $B_\ep$ and $C_\ep$ are real valued as well.
\end{proof}

\begin{remark}\label{R: -1 term}
By construction, the subprincipal symbols of $B_\ep$ and $C_\ep$ (acting on half-densities) are zero in a neighborhood of $x_0$. Indeed, the principal symbols are constant as functions of $x$ in a neighborhood of $x_0,$ and in the coordinates $\kappa_{x_0}$ used in Lemma \ref{P:Microlocal Identity} the total symbols of $B_\ep$ and $C_\ep$ are homogeneous functions of order zero. Thus, in any coordinates, the order $-1$ parts of the polyhomogeneous expansions of the total symbols of $B_\ep$ and $C_\ep$ vanish in an neighborhood of $x_0.$
\end{remark}

\begin{remark}\label{remark R_Q} We record precise asymptotics for the on-diagonal behavior of $QEQ^*(x,x, \mu)$ for all $x\in \mathcal O_\ep$ and $Q \in \{Id, B_\ep, C_\ep\}$. Write $q_0$ for the principal symbol of $Q.$ Using that the sub-principal symbols of both $Q$ and $QQ^*$ (acting on half-densities) vanish identically in a neighborhood $\twiddle{\mathcal O}_\ep$ of $x_0,$ Lemma 3.2 and Lemma 3.3 in \cite{SZ} show that there exist constants $c, c_\ep>0$ so that for all $x \in \twiddle{\mathcal O}_\ep$
\begin{align*}
QEQ^*(x,x, \leb)
=& \frac{1}{(2\pi)^n} \int_{|\xi|_{g_x}<\leb} |q_0(x,\xi)|^2\; d\xi +R_Q(x,x,\leb)
\end{align*}
with
\begin{equation}\label{R_Q}
\abs{R_Q(x,x,\leb)}\leq c \,{\ep}\leb^{n-1}+c_\ep \leb^{n-2}
\end{equation}
for all $\leb \geq 1$. 
 We note that a similar result is obtained in \cite[Theorem 1.8.7]{SV} with the difference  that the latter is proved for points $x$ that are non-focal. 
\end{remark}

\section{Smoothed projector: proof of Proposition \ref{P:Smoothed Est 1}}\label{S:Smooth Microlocal Proj}
Proposition \ref{P:Smoothed Estimate} below is our main technical estimate on $E_\leb(x,y).$ We use Proposition \ref{P:Smoothed Estimate} to prove  Propositions \ref{P:Smoothed Est 1} and \ref{P:Smoothed Est 2} in \S \ref{S Est 1} and \S \ref{S Est 2} respectively. 

\begin{proposition}\label{P:Smoothed Estimate}
Let $(M,g)$ be a compact, smooth, Riemannian manifold of dimension $n\geq 2$, with no boundary. Let $\ep>0$ and $Q \in \{Id, B_\ep, C_\ep\}$ for $B_\ep$ and $C_\ep$ as introduced in Lemma \ref{P:Microlocal Identity}. Let $q_0$ be the principal symbol of $Q$. Then, for all $x,y \in \mathcal \mathcal O_\ep$ with $\dist(x,y)\leq \tfrac{1}{2}\inj(M,g)$ and all $\mu \geq 1$, we have 
 \begin{align}
\partial_\mu(\rho*EQ^*)(x,y,\mu)
\notag &=\frac{\mu^{n-1}}{(2\pi)^n\Theta(x,y)^{\frac{1}{2}}}\left[\int_{S_y^*M}e^{i \langle \exp_y^{-1}(x),\w \rangle_{g_y}} q_0(y,\w) \frac{d\omega}{\sqrt{|g_y|}}\right.\\
&\left.\;\;\; +\int_{S_y^*M}e^{i \langle \exp_y^{-1}(x),\w \rangle_{g_y}} D_{-1}^Q(y,\w)\frac{d\omega}{\sqrt{|g_y|}}\right]+ W(x,y,\mu). \label{E:Smoothed Est 1}
\end{align}  
Here, $d\w$ is the Euclidean surface measure on $S_y^*M$ and the function $\Theta$ is defined in \eqref{E:Theta Def}. The function  $D_{-1}^Q\in S^{-1}$ and there exists $C>0$ so that for every $\ep>0$
\begin{equation}\label{E:K rln}
D_{-1}^{B_\ep}(y,\xi)+D_{-1}^{C_\ep}(y,\xi)=0 \qquad \forall \,y\in \mathcal O_\ep,
\end{equation}
\begin{equation}\label{E:K rln}
\sup_{x,y\in \mathcal O_\ep}\abs{\int_{S_y^*M}e^{i \langle \exp_y^{-1}(x),\w \rangle_{g_y}} {D_{-1}^Q(y,\w)} \frac{d\omega}{\sqrt{|g_y|}}}\leq C\, \ep.
\end{equation}
In addition, $W$ is a smooth function in $(x,y)$ for which there exists $C>0$ such that for all $\mu >0$
\begin{equation}\label{E:W long range}
\sup_{\dist(x,y)\leq \frac{1}{2} \inj(M,g)}|W(x,y,\mu)| \leq C \lr{ \mu^{n-2}\dist(x,y)+ (1+\mu)^{n-3}}.
\end{equation}
\end{proposition}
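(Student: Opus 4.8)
The plan is to run the wave-kernel method of \eqref{E:WKM} through the parametrix of Proposition \ref{P:Parametrix}. Since convolving with $\rho$ multiplies the half-wave kernel by $\hat\rho(t)$ in the $t$-variable, one has $\partial_\mu(\rho\ast EQ^{*})(x,y,\mu)=\mathcal F^{-1}_{t\to\mu}\bigl(\hat\rho(t)\,(UQ^{*})(t,x,y)\bigr)(\mu)$, and as $\supp\hat\rho\subset(-\inj(M,g),\inj(M,g))$ only the short-time part of $U(t)$ enters, so Proposition \ref{P:Parametrix} applies. Because $Q\in\Psi^{0}(M)$, the composition $UQ^{*}$ is again an FIO attached to the canonical relation $\Gamma$ of \eqref{E:Canonical Relation}; by the uniqueness clause of Proposition \ref{P:Parametrix} together with the composition calculus for FIOs built from global phase functions (\cite{LSV}, cf.\ \cite{Hor 1-4}), it has a representation of the form \eqref{E:Parametrix} with the \emph{same} phase $\phi$, the \emph{same} prefactor $\chi(\dist(x,y))\Theta(x,y)^{-1/2}$, and an $x$-independent polyhomogeneous amplitude $A^{Q}(t,y,\xi)\sim\sum_{j\ge0}A^{Q}_{-j}(t,y,\xi)$ of order $0$. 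Its principal part is $A^{Q}_{0}(t,y,\xi)=q_{0}(y,\xi/\abs{\xi}_{g_y})$, independent of $t$, because the principal symbol of $\widetilde{U}(t)$ on half-densities is $\equiv1$ for every $t$ (as recalled after \eqref{E: no tilde U}), so that of $\widetilde{U}(t)\widetilde{Q^{*}}$ is the base-space pullback of $q_{0}$. I would then set $D^{Q}_{-1}(y,\xi):=A^{Q}_{-1}(0,y,\xi)\in S^{-1}$.

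Inserting this parametrix and interchanging the order of integration, the $t$-integral produces $G^{Q}(s;y,\xi):=\tfrac1{2\pi}\int_{\R}e^{its}\hat\rho(t)A^{Q}(t,y,\xi)\,dt$, which is Schwartz in $s$ because $\hat\rho A^{Q}$ is smooth and compactly supported in $t$; hence, modulo a term Schwartz in $\mu$ coming from the parametrix's smoothing error, $\partial_\mu(\rho\ast EQ^{*})(x,y,\mu)$ equals $\tfrac{\chi(\dist(x,y))}{(2\pi)^{n}\Theta(x,y)^{1/2}}\int_{T^{*}_{y}M}e^{i\langle\exp_y^{-1}(x),\xi\rangle_{g_y}}G^{Q}(\mu-\abs{\xi}_{g_y};y,\xi)\tfrac{d\xi}{\sqrt{\abs{g_y}}}$. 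Passing to polar coordinates $\xi=r\w$, expanding $A^{Q}(t,y,r\w)=\sum_{j\ge0}r^{-j}A^{Q}_{-j}(t,y,\w)$ for $r\ge1$ (the region $r<1$ is $O(\mu^{-\infty})$ because $G^{Q}(\mu-r;\,\cdot\,)$ is), substituting $r=\mu-s$, and using $\int_{\R}e^{-is\sigma}G^{Q}_{-j}(s;y,\w)\,ds=\hat\rho(\sigma)A^{Q}_{-j}(\sigma,y,\w)=A^{Q}_{-j}(\sigma,y,\w)$ for $\sigma:=\langle\exp_y^{-1}(x),\w\rangle_{g_y}$ (valid since $\abs{\sigma}\le\dist(x,y)\le\tfrac12\inj(M,g)$ and $\hat\rho\equiv1$ there by \eqref{E:Rho Def}), the $r$-integral reduces to a finite sum of powers of $\mu$ with coefficients $\partial_\sigma^{k}A^{Q}_{-j}(\sigma,y,\w)$, up to $O(\mu^{-\infty})$ tails from replacing $\int_{-\infty}^{\mu}$ by $\int_{\R}$. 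The $j=0$ top-order term recovers the main term of \eqref{E:Smoothed Est 1} (the $k\ge1$ terms with $j=0$ vanish, since $\partial_\sigma^{k}[\hat\rho(\sigma)A^{Q}_{0}(\sigma,\cdot)]=0$ there); the $j=1$ top-order term gives $A^{Q}_{-1}(\sigma,y,\w)=D^{Q}_{-1}(y,\w)+O(\abs{\sigma})$, the first summand being the $D^{Q}_{-1}$ term of \eqref{E:Smoothed Est 1} and the error $O(\abs{\sigma})=O(\dist(x,y))$ accounting for the $\mu^{n-2}\dist(x,y)$ part of $W$; everything else — the $j\ge2$ terms, the lower binomial powers, and the Schwartz tails — is collected into $W$ and, by counting powers of $\mu$, obeys the $(1+\mu)^{n-3}$ part of \eqref{E:W long range}.

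For \eqref{E:K rln}: the identity $D^{B_\ep}_{-1}+D^{C_\ep}_{-1}=0$ on $\mathcal O_\ep$ follows from linearity of $Q\mapsto A^{Q}$ and \eqref{E:Microlocal Identity}, because $B_\ep+C_\ep=\psi_\ep^{2}$ is a multiplication operator, so $U\psi_\ep^{2}$ has amplitude $A(t,y,\xi)\psi_\ep(y)^{2}$ with order $-1$ part $A_{-1}(t,y,\xi)\psi_\ep(y)^{2}$; evaluating at $t=0$ and using $A(0,\cdot)=1$ from \eqref{E: A on diagonal} gives $D^{\psi_\ep^{2}}_{-1}=0$. For the bound, by the construction of Lemma \ref{P:Microlocal Identity} the symbol $D^{Q}_{-1}$ is built from $\varrho_\ep$ and its $\xi$-derivatives, hence is supported in $\w\in\overline{\supp\varrho_\ep}$ — a set of surface measure $<2\ep$ on $S^{*}_{y}M$, which also makes $D^{Q}_{-1}$ vanish near $x_0$ (Remark \ref{R: -1 term}) — and satisfies $\norm{D^{Q}_{-1}(y,\cdot)}_{L^{2}(S^{*}_{y}M)}^{2}\le C\ep$ uniformly in $\ep$ (the same mechanism as \eqref{E:Symbol Est 1}). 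Cauchy--Schwarz against the characteristic function of $\supp\varrho_\ep$ then gives $\bigl\lvert\int_{S^{*}_{y}M}e^{i\langle\exp_y^{-1}(x),\w\rangle_{g_y}}D^{Q}_{-1}(y,\w)\tfrac{d\w}{\sqrt{\abs{g_y}}}\bigr\rvert\le(2\ep)^{1/2}\norm{D^{Q}_{-1}(y,\cdot)}_{L^{2}}\le C\ep$. Combining the three steps yields \eqref{E:Smoothed Est 1}--\eqref{E:W long range} and hence the proposition.

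The step I expect to be the main obstacle is the second one: one must perform the composition $UQ^{*}$ while keeping the amplitude independent of $x$ (so that $D^{Q}_{-1}$ depends only on $(y,\w)$), must isolate the unique subleading contribution carrying \emph{no} gain of $\dist(x,y)$ and display it as the $D^{Q}_{-1}$ term rather than bury it in $W$, and must verify that the entire leftover is uniformly $O\bigl(\mu^{n-2}\dist(x,y)+(1+\mu)^{n-3}\bigr)$ over the whole range $\dist(x,y)\le\tfrac12\inj(M,g)$ — which forces a precise accounting of how many powers of $\mu$ are traded for each power of $\dist(x,y)$ and each order of homogeneity lost. The uniform-in-$\ep$ control of $D^{Q}_{-1}$ used in the third step is the secondary difficulty, resting on the explicit construction of $B_\ep,C_\ep$ in Lemma \ref{P:Microlocal Identity} and Remark \ref{R: -1 term}.
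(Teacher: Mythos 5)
Your proposal is correct, and its first half coincides with the paper's argument almost step for step: the identity $\partial_\mu(\rho*EQ^*)(x,y,\mu)=\tfrac{1}{2\pi}\int e^{it\mu}\hat\rho(t)\,U(t)Q^*(x,y)\,dt$, the representation of $U(t)Q^*$ with the same phase $\phi$, the prefactor $\chi(\dist(x,y))\Theta(x,y)^{-1/2}$ and an $x$-independent polyhomogeneous amplitude $D^Q\sim\sum_j D^Q_{-j}$ via \cite{LSV}, and the three symbol facts ($D_0^Q=q_0$; $D_{-1}^{B_\ep}+D_{-1}^{C_\ep}=0$ from \eqref{E: A on diagonal}--\eqref{E: A off diagonal} and the uniqueness of the $x$-independent amplitude; the $O(\ep)$ bound from the measure of $\supp\varrho_\ep$) are exactly \eqref{E:D0}--\eqref{E:D2}. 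Where you genuinely diverge is the asymptotic evaluation of the resulting oscillatory integral. The paper rescales $\xi=\mu r\w$, discards $r\notin[\tfrac12,\tfrac32]$ by non-stationary phase in $t$, invokes Sogge's sphere-integral asymptotics \cite[Theorem 1.2.1]{Sog}, and finishes with two-dimensional stationary phase in $(t,r)$ at $(\pm\dist(x,y),1)$, using $\partial_t\hat\rho(\pm\dist(x,y))=0$ to kill the first correction. You instead integrate in $t$ exactly to produce the Schwartz function $G^Q(\mu-|\xi|_{g_y};y,\xi)$, substitute $r=\mu-s$ in polar coordinates, and use the moment identities $\int e^{-is\sigma}s^k G^Q_{-j}(s)\,ds=(i\partial_\sigma)^k\bigl[\hat\rho(\sigma)A^Q_{-j}(\sigma,\cdot)\bigr]$ together with $\hat\rho\equiv 1$ on $|\sigma|\le\tfrac12\inj(M,g)$. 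Both routes isolate the same two contributions --- $q_0$ at order $\mu^{n-1}$ and $D^Q_{-1}(0,y,\w)$ one order lower, with the Taylor error $D_{-1}^Q(\sigma,\cdot)-D_{-1}^Q(0,\cdot)=O(\dist(x,y))$ supplying the $\mu^{n-2}\dist(x,y)$ part of $W$ --- and both need the same uniform symbol seminorms for \eqref{E:W long range}. Your version trades Sogge's lemma and the stationary-phase remainder bookkeeping for exact Fourier inversion; it is more elementary and makes the vanishing of the $j=0$, $k\ge1$ terms transparent (since $A_0^Q$ is $t$-independent), at the modest cost of justifying the interchange of the $t$- and $\xi$-integrations and the uniformity of the Schwartz decay of $G^Q$ in $(y,\xi)$, both routine. (Your computation places the $D^Q_{-1}$ term at order $\mu^{n-2}$, which matches the paper's own display \eqref{E: error from q_0}; the $\mu^{n-1}$ prefactor attached to that term in the statement is a weaker normalization and immaterial for the applications.)
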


\begin{proof}
Let $x,y \in M$ with $\dist(x,y)\leq \tfrac{1}{2}\inj(M,g)$. Note that 
\begin{equation}\label{E: convolution}
\partial_\mu (\rho * EQ^*)(x,y,\mu)= \frac{1}{2\pi} \int_{-\infty}^{+\infty} e^{it \lambda} \hat \rho (t) U(t) Q^*(x,y) dt.
\end{equation}
We start by rewriting $U(t) Q^*(x,y) $ using the parametrix \eqref{E:Parametrix} for $U(t).$ We have
\begin{equation}\label{E: UQ}
U(t) Q^*(x,y)=\frac{\chi(d_g(x,y))}{\lr{2\pi}^n\Theta(x,y)^{\frac{1}{2}}} \int_{T_y^*M}  e^{i \langle \exp_y^{-1}(x),\xi \rangle_{g_y}-t|\xi|_{g_y}}  \, D^Q(t,y,\xi)  \frac{d\xi}{\sqrt{|g_y|}},
\end{equation}
for some $D^Q\in S^0$ with polyhomogeneous expansion $D^Q \sim \sum_{j\geq 0}D_{-j}^Q.$ We claim that
\begin{equation}
D_0^Q(0,y,\xi)=q_0(y,\xi),\label{E:D0}
\end{equation}
and that for all $\ep>0$
\begin{equation}
D_{-1}^{B^\ep}(0,y,\xi)+D_{-1}^{C^\ep}(0,y,\xi)=0, \label{E:D1}
\end{equation}
\begin{equation}
\sup_{x,y\in \mathcal O_\ep}\abs{\int_{S_y^*M}e^{i \langle \exp_y^{-1}(x),\w \rangle_{g_y}} {D_{-1}^Q(0,y,\xi)} \frac{d\omega}{\sqrt{|g_y|}}}\leq C\, \ep, \label{E:D2}
\end{equation}
where $C$ is a constant independent of $\ep$.
Indeed, let $\twiddle{U}(t) \twiddle{Q}^*$ denote the operator $U(t) Q^*$ when regarded as acting on half-densities, and note that by the same computations that deduce \eqref{E: no tilde U} from \eqref{E: tilde U} we have
\[\twiddle{U}(t) \twiddle{Q}^*(x,y)=\frac{\chi(d_g(x,y))}{\lr{2\pi}^n} \int_{T_y^*M}  e^{i \langle \exp_y^{-1}(x),\xi \rangle_{g_y}-t|\xi|_{g_y}}  \, {D^Q}(t,y,\xi)  d_\phi(t,x,y,\xi){d\xi}.\]
Since the principal symbols of both $\twiddle{U}$ and $\twiddle{Q}$ are independent of $t$, and $\tilde U(0)=Id$, we know 
\[D^Q_0(t,y,\xi)=q_0(y,\xi).\]
Moreover, note that $D_{-1}^{Id}(0,y,\xi)=0$ by Proposition \ref{P:Parametrix} and  that $D^{Id}$ is uniquely determined modulo $S^{-\infty}$ by the phase function $\phi$ (cf \cite{LSV}). This proves \eqref{E:D1} since on $\mathcal O_\ep$ we have $Id=B_\ep+C_\ep.$  Finally, by the construction of $B_\ep,$ we see that the size of the support of $D_{-1}^{B_\ep}(0,y,\xi)$ is smaller than  a constant times $\ep.$ This proves \eqref{E:D2} for $Q=B_\ep$ and hence for $Q=C_\ep$ since $D_{-1}^{B_\ep}=-D_{-1}^{C_\ep}.$ 

Combining \eqref{E: convolution} and \eqref{E: UQ}, and changing coordinates $\xi \mapsto \mu r \w$ where $(r, \w) \in [0, +\infty) \times S_y^*M$, we obtain that up to an $O(\mu^{-\infty})$ error that
\begin{align}\label{E:Compact Int}
&\Theta(x,y)^{\frac{1}{2}} \cdot \partial_\mu (\rho * EQ^*)(x,y,\mu)= \notag\\
&\frac{\mu^n}{\lr{2\pi}^{n+1}}\int_\R \int_0^\infty \hat{\rho}(t)e^{i\mu t(1-r)}\chi(r)r^{n-1}\left(\int_{S_y^*M}e^{i\mu r\langle \exp_y^{-1}(x),\w \rangle_{g_y}}D^Q(t,y,r\mu\w)d\w\right) dr dt,
\end{align}
where $\chi\in C_c^\infty(\R)$ is a cut-off function that is identically $1$ near $r=1$ and vanishes for $r\not \in [\tfrac{1}{2},\tfrac{3}{2}].$ Indeed, on the support of $1-\chi,$ the operator $L=\frac{1}{i\mu\lr{1-r}}\dell_t$ is well-defined, preserves $e^{i\mu t \lr{1-r}},$ and its adjoint $L^*$ satisfies that for all $k \in \Z^+$
\[\abs{\lr{L^*}^k\lr{r^{n-1}(1-\chi(r))\hat{\rho}(t) \int_{S_y^*M}e^{i\mu r\langle \exp_y^{-1}(x),\w \rangle_{g_y}} D^Q(t,y,r\mu\w)d \w\big.}}\leq \lr{1+\mu}^{-k}\cdot c_k\]
for some $c_k>0.$ Define
\[S^Q(t,y,\xi) := q_0(y,\xi)+D_{-1}^Q(t,y,\xi)\] 
to be the two leading terms of $D^Q.$ Since $D^Q-S^Q\in S^{-2},$ up to a $O(\mu^{n-3})$ error, we have
\begin{align}
&\Theta(x,y)^{\frac{1}{2}} \cdot \partial_\mu (\rho * EQ^*)(x,y,\mu)= \notag\\
&\frac{\mu^n}{\lr{2\pi}^{n+1}}\int_\R \int_0^\infty \hat{\rho}(t)e^{i\mu t(1-r)}\chi(r)r^{n-1}\left(\int_{S_y^*M}e^{i\mu r\langle \exp_y^{-1}(x),\w \rangle_{g_y}}S^Q(t,y,r\mu\w)d\w\right) dr dt. \label{E:Compact Int 3}
\end{align}
According to \cite[Theorem 1.2.1]{Sog} there exist smooth functions $a_{\pm}, b_{\pm} \in C^{\infty}(M \times \R^n)$ such that for all $(y,\eta) \in M \times T_y^*M$
 \begin{align}
\int_{S_y^*M}e^{i \langle\eta,\omega\rangle_{g_y}} S^Q(t,y,\mu r\w)\frac{d\w}{\sqrt{|g_y|}}&= \sum_{\pm}e^{\pm i|\eta|_{g_y}}\lr{a_\pm(y,\eta)+r^{-1}\mu^{-1}\cdot b_{\pm}(t,y,\eta)},\label{E:FT q0}
\end{align}
and
\begin{align}
|\partial_\eta^\alpha a_{\pm}(y, \eta)|  &\leq C_\alpha (1+|\eta|_{g_y})^{-\frac{n-1}{2}-|\alpha|},\label{E: a_pm} \\
|\partial_t^\beta\partial_\eta^\alpha b_{\pm}(t,y, \eta)| & \leq C_{\alpha,\beta} (1+|\eta|_{g_y})^{-\frac{n-1}{2}-|\alpha|-1},\label{E: b_pm}
\end{align}
for all multi-indices $\alpha\geq 0$ and $\beta\geq 0$, and for some $C_\alpha,C_{\alpha, \beta}>0$ independent of $t,y,$ and $\eta$. Hence, \eqref{E:Compact Int 3} equals
\begin{align}
\frac{\mu^n}{(2\pi)^{n+1}} \sum_{\pm} \int_\R \int_0^\infty e^{i \mu \psi_\pm(t,r,x,y)}g_{\pm}(t,r,x,y,\mu) dr dt, \label{E:Compact Int 2}
\end{align}
where $\psi_\pm(t,r,x,y)=t(1-r) \pm r\dist(x,y)$ and
$$g_{\pm}(t,r,x,y,\mu)=\frac{1}{(2\pi)^{n}}r^{n-1}\chi(r) \hat \rho(t)\lr{ a_\pm(y, r\mu \exp_y^{-1}(x))+r^{-1}\mu^{-1}b_{\pm}(t,y, r\mu \exp_y^{-1}(x))}.$$
Note that the critical points of $\psi_\pm$ are $(t_c^{\pm},r_c^{\pm})=(\pm \dist(x,y),1)$ and that 
\[\det ( \text{Hess} \,\psi_{\pm}(t_c^\pm,r_c^\pm,x,y))=1.\]
Hence, we apply the method of stationary phase to get that \eqref{E:Compact Int 2} is 
 \begin{align}
&\mu^{n-1}  e^{\pm i\mu \dist(x,y)}\sum_{\pm} \Big(g_{\pm}(t_c^{\pm}, r_c^\pm, x,y,\mu)- i\mu^{-1}\dell_r\dell_t g_\pm(t_c^{\pm}, r_c^\pm ,x,y,\mu)\Big) \label{E: SP eq1}\\
&\qquad+O \Big(\mu^{n-3}\sup_{(t,r) \in \supp (g_\pm)}\sup_{\alpha+\beta \leq 7}\abs{\dell_t^{\alpha}\dell_r^{\beta}g_\pm(t,r,x,y,\mu)}\Big). \label {E: SP eq2}
\end{align}
We take $7$ derivatives in the last term since in stationary phase with a quadratic phase over $\R^k,$ the remainder after the first $N$ terms is bounded by $k+1+2N$ derivatives of the amplitude. Note that $\partial_t \hat \rho(t)=0$ for $t=\pm \dist(x,y).$ Hence, since $a_{\pm}$ are independent of $t,$ we have
\[ i\mu^{-1}\dell_r\dell_t g_\pm(t_c^{\pm}, r_c^\pm ,x,y,\mu)=O\lr{\mu^{-2}}.\]
Moreover, by \eqref{E: a_pm} and \eqref{E: b_pm}, the derivatives of $g$ in $t$ and $r$ are uniformly bounded. Hence, 
\begin{align}\label{E: error from q_0}
&\frac{\mu^{n-1}}{\lr{2\pi}^n}\int_{S_y^*M}e^{i \langle\exp_y^{-1}(x),\omega\rangle_{g_y}} \lr{q_0(y,\omega) +\mu^{-1}D_{-1}^Q(\dist(x,y),y,\w)}\frac{d\w}{\sqrt{|g_y|}}\\
&\qquad +O(\mu^{n-3}).
\end{align}
Taylor expanding $D_{-1}^Q(\dist(x,y),y, \w)=D_{-1}^Q(0,y,\w)+O(\dist(x,y))$ and recalling \eqref{E:D1} and \eqref{E:D2} completes the proof. 
\end{proof}

\subsection{Proof of Proposition \ref{P:Smoothed Est 1}}\label{S Est 1}
Proposition \ref{P:Smoothed Est 1} follows by integrating \eqref{E:Smoothed Est 1} with respect to $\mu$ from $0$ to $\leb$ applied to $Q=Id$. We have
\begin{equation}
\rho*E(x,y,\leb) =\int_0^\leb \frac{\mu^{n-1}}{(2\pi)^n\Theta(x,y)^{\frac{1}{2}}}\left(\int_{S_y^*M}e^{i \mu\langle \exp_y^{-1}(x),\w\rangle_{g_y}} \frac{d\omega}{\sqrt{|g_y|}} \right)d\mu+ \int_0^\leb W(x,y,\mu) d\mu.\label{E:Smoothed Est Main}
\end{equation}
Changing coordinates to $\xi=\mu \w$ we find 
\begin{equation}
\rho*E(x,y,\leb) =\frac{\leb^n}{\lr{2\pi}^n \Theta(x,y)^{1/2}}\int_{\abs{\xi}_{g_y}<1}e^{i \leb\langle \exp_y^{-1}(x),\xi\rangle_{g_y}} \frac{d\xi}{\sqrt{|g_y|}}+ \int_0^\leb W(x,y,\mu) d\mu.\label{E:Smoothed Est Main2}
\end{equation}
Note that 
\[\Theta(x,y)^{-1/2}=1+O(\dist(x,y)^2)\]
and 
\[\frac{\exp_y^{-1}(x)}{i\leb\dist(x,y)^2}\nabla_\xi e^{i \leb\langle \exp_y^{-1}(x),\xi\rangle_{g_y}}=e^{i \leb\langle \exp_y^{-1}(x),\xi\rangle_{g_y}}.\]
Therefore, we may integrate by parts once in \eqref{E:Smoothed Est Main2} to obtain 
\begin{align*}
\rho*E(x,y,\leb)& =\frac{\leb^n}{\lr{2\pi}^n}\int_{\abs{\xi}_{g_y}<1}e^{i \leb\langle \exp_y^{-1}(x),\xi\rangle_{g_y}} \frac{d\xi}{\sqrt{|g_y|}}+ \int_0^\leb W(x,y,\mu) d\mu\\
&+ O\left(\dist(x,y)\leb^{n-1} \int_{\abs{\xi}_{g_y}=1}e^{i \leb\langle \exp_y^{-1}(x),\w\rangle_{g_y}} d\w \right)
\end{align*}
Since 
\[\sup_{\dist(x,y)<\inj(M,g)}\abs{\dist(x,y)\int_{\abs{\xi}_{g_y}=1}e^{i \leb\langle \exp_y^{-1}(x),\w\rangle_{g_y}} d\w}=o(1)\] 
as $\leb\gives \infty$, we find that 
\begin{align*}
\rho*E(x,y,\leb)& =\frac{\leb^n}{\lr{2\pi}^n}\int_{\abs{\xi}_{g_y}<1}e^{i \leb\langle \exp_y^{-1}(x),\xi\rangle_{g_y}} \frac{d\xi}{\sqrt{|g_y|}}+ \int_0^\leb W(x,y,\mu) d\mu+o(\leb^{n-1})
\end{align*}
By \eqref{E:W long range}, we have
\[\sup_{x,y\in B(x_0,\inj(M,g)/2)}\abs{\int_0^\leb W(x,y,\mu)d\mu}\leq c\dist(x,y)\leb^{n-1}+C \leb^{n-2},\]
for some $c,C>0$ as claimed. 
\qed

\section{Smooth vs rough projector: proof of Proposition \ref{P:Smoothed Est 2}}\label{S Est 2}
Let $x_0\in M$ be a non self-focal point and fix $\ep>0$. The proof of  Proposition \ref{P:Smoothed Est 2} amounts to show that there exists $c>0$ so that for all $\ep>0$ there is an open neighborhood $\mathcal U_\ep$ of $x_0$ and a positive constant $c_{\ep}$ with
\begin{equation}
\sup_{x,y\in \mathcal U_\ep}\abs{E_\leb(x,y)-\rho*E_\leb(x,y)}\leq c\, \ep \leb^{n-1}+c_{\ep}\leb^{n-2}\label{E:NTS0}
\end{equation}
 for all $\leb\geq 1$. 
 It is at this point that the assumption that $x_0$ is a non self-focal point is needed. In \S \ref{S:Microlocal Identity} we construct a partition of the Identity operator localized to $x_0$. We use  such partition to split $\abs{E_\leb(x,y)-\rho*E_\leb(x,y)}$ into different pieces, each of which we shall control using two types of Tauberian Theorems described in  \S  \ref{S:Tauberian}. We conclude this section presenting  the proof of Proposition \ref{P:Smoothed Est 2} in \S \ref{S: Proof of Prop Est 2}.

 To ease the notation, we will write
\[E(x,y,\leb):=E_\leb(x,y).\]
To prove \eqref{E:NTS0}, we use the operators $B_\ep,C_\ep$ and the function $\psi_\ep$ constructed in Lemma \ref{P:Microlocal Identity}. We set 
\begin{align}
\alpha_\ep(x,y,\leb)&:= EC_\ep^*(x,y,\leb) +\tfrac{1}{2}\lr{E(x,x,\leb)+C_\ep EC_\ep^*(y,y,\leb) \big.}, \label{alpha} \\
\beta_\ep(x,y,\leb)&:=\rho*EC_\ep^*(x,y,\leb) +\tfrac{1}{2}\lr{E(x,x,\leb)+C_\ep EC_\ep^*(y,y,\leb)\big.},\label{beta}
\end{align}
where $x$ and $y$ are any two points in $M$. Note that 
$$|\alpha_\ep(x,y,\leb)-\beta_\ep(x,y,\leb)|=|EC_\ep^*(x,y,\leb)-\rho * EC_\ep^*(x,y,\leb)|.$$
In addition, observe that 
$$\alpha_\ep(x,y,\leb):= \tfrac{1}{2}\sum_{\leb_j\leq \leb} \left[\varphi_j(x)+(C_\ep \varphi_j)(y)\right]^2,$$
and so $\alpha_\ep(x,y,\leb)$ is an increasing function of $\leb$ for any $x,y$ fixed. We also set
\begin{equation}\label{g}
g_\ep(x,y, \leb):=EB_\ep^*(x,y,\leb)-\rho * EB_\ep^*(x,y,\leb).
\end{equation}

Since $B_\ep+C_\ep=\psi_\ep^2$ and $\psi_\ep=1$ in a neighborhood of $x_0$, relation \eqref{E:NTS0} would hold if we prove that there exist positive constants $c$ and $c_{\ep}$, with $c$ independent of $\ep$, and a neighborhood $\mathcal U_\ep$ of $x_0$ such that for all $\leb \geq 1$
\begin{align}
\label{E:NTS1} &\sup_{x,y\in \mathcal U_\ep}\abs{\alpha_\ep(x,y,\leb)-\beta_\ep(x,y,\leb)}\leq c\, \ep \leb^{n-1}+c_{\ep}\leb^{n-2},\\
\label{E:NTS2}&\sup_{x,y\in \mathcal U_\ep}\abs{g_\ep(x,y,\leb)}\leq c\,\ep \leb^{n-1}+c_{\ep}\leb^{n-2}.
\end{align}

\subsection{Tauberian Theorems}\label{S:Tauberian}
To control $\abs{\alpha_\ep(x,y,\leb)-\beta_\ep(x,y,\leb)}$ and $\abs{g_\ep(x,y,\leb)}$ we use two different Tauberian-type theorems. To state the first one, fix a positive function $\phi\in \mathcal S(\R)$ so that $\supp \hat{\phi}\subseteq (-1,1)$ and $\hat{\phi}(0)=1.$ We have written $\hat{f}$ for the Fourier transform of $f.$ Define for each $a>0$
\begin{equation}
  \label{E:Phia Def}
  \phi_a(\leb):=\tfrac{1}{a}\phi\lr{\tfrac{\leb}{a}},
\end{equation}
so that $\hat{\phi}_a(t)=\hat{\phi}(at).$

\begin{Lem}[Tauberian Theorem for monotone functions] \label{L: FTT 1} 
Let $\alpha$ be an increasing temperate function with $\alpha(0)=0$ and let $\beta$ be a function of locally bounded variation with $\beta(0)=0.$ Suppose further that there exist $M_0>0$, $a>0$ and a constant $c_a$ so that the following two conditions hold:
\begin{enumerate}
\item[(a)] There exists $m \in \mathbb N$ so that
  \begin{align*}
   \int_{\mu-a}^{\mu+a} \abs{d\beta}&\leq a M_0 (1+\abs{\mu})^{m-1} + c_a \abs{\mu}^{m-2} \qquad \forall \mu\geq 0.
  \end{align*}

\item[(b)] There exist $\kappa \in \Z \backslash \{-1\}$ with $\kappa \leq m-1$, and $M_a>0$ so that 
\begin{align*}
  \abs{\lr{d\alpha-d\beta}*\phi_a(\mu)}&\leq M_a \lr{1+\abs{\mu}}^{\kappa}\qquad \forall \mu\geq 0.
\end{align*}

\end{enumerate}
Then, there exists $c>0$ depending only on $\phi$ such that
  \begin{equation}
\abs{\alpha(\mu)-\beta(\mu)}\leq c \lr{a\,M_0\abs{\mu}^{m-1}+ c_a \abs{\mu}^{m-2} + M_a\lr{1+\abs{\mu}}^{\kappa +1}},
\end{equation}
for all $ \mu\geq 0.$
\end{Lem}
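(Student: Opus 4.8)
The plan is to run the Tauberian argument of Safarov and of Sogge--Zelditch: use the monotonicity of $\alpha$ together with hypotheses (a) and (b) to control the local oscillation of $\sigma:=\alpha-\beta$, and then ``de-smooth'' the bound (b) on the mollified difference $(d\alpha-d\beta)*\phi_a$. Throughout, $\phi_a(\leb)=a^{-1}\phi(\leb/a)$ is a nonnegative Schwartz approximate identity with $\int\phi_a=\hat\phi(0)=1$, and since $\phi$ is positive, $c_0:=\min_{[-1/2,1/2]}\phi>0$. It suffices to prove the estimate for $\mu$ outside a fixed bounded interval, since for $\mu$ in a bounded interval the bound follows directly from the oscillation estimate of Step 1 (and the bounded range is vacuous in the applications, where $\alpha$ and $\beta$ vanish near the origin).

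\textbf{Step 1 (oscillation bound from monotonicity).} Because $d\alpha\ge 0$ and $\phi_a(\mu-s)\ge c_0a^{-1}$ whenever $|\mu-s|\le a/2$, one has the Tauberian-type inequality
\[
\alpha\Big(\mu+\tfrac a2\Big)-\alpha\Big(\mu-\tfrac a2\Big)\;\le\;c_0^{-1}a\,\big((d\alpha)*\phi_a\big)(\mu),
\]
i.e. convolving the nonnegative measure $d\alpha$ against the positive bump $\phi_a$ already detects its full increment over a window of size $\sim a$. Writing $(d\alpha)*\phi_a=(d\beta)*\phi_a+(d\alpha-d\beta)*\phi_a$, I would bound the first summand by summing hypothesis (a) over the partition of $\R$ into length-$a$ intervals, weighted by the rapid decay of $\phi_a$ --- this produces $|(d\beta)*\phi_a(\mu)|\le C\,M_0(1+|\mu|)^{m-1}+c_a'(1+|\mu|)^{m-2}$ with $C$ depending only on $\phi$ (and $m$) --- and the second summand by (b). Hence, for all $\mu$ in the unbounded range,
\[
\big((d\alpha)*\phi_a\big)(\mu)\;\le\;C\,M_0(1+|\mu|)^{m-1}+M_a(1+|\mu|)^{\kappa}+c_a'(1+|\mu|)^{m-2},
\]
so the oscillation of $\alpha$ over any length-$a$ interval centred at $\mu$, hence (by (a)) that of $\beta$, hence that of $\sigma$, is at most $c_0^{-1}a$ times the right-hand side; over an interval of length $L$ one multiplies by $1+L/a$.

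\textbf{Step 2 (the smoothed difference, de-smoothing, and conclusion).} For the smoothed difference, $\tfrac{d}{d\mu}(\sigma*\phi_a)=(d\alpha-d\beta)*\phi_a$, so integrating (b) from a fixed $\mu_0\ge0$,
\[
|(\sigma*\phi_a)(\mu)|\;\le\;|(\sigma*\phi_a)(\mu_0)|+M_a\!\int_{\mu_0}^{\mu}(1+s)^{\kappa}\,ds\;\le\;|(\sigma*\phi_a)(\mu_0)|+C\,M_a(1+|\mu|)^{\kappa+1},
\]
the last step being exactly where the hypothesis $\kappa\ne-1$ enters (it is what prevents a logarithm), and $|(\sigma*\phi_a)(\mu_0)|$ being a finite constant absorbed into the $(1+|\mu|)^{m-2}$ term for $\mu$ large. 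For the de-smoothing error, $\sigma(\mu)-(\sigma*\phi_a)(\mu)=\int\big(\sigma(\mu)-\sigma(\mu-s)\big)\phi_a(s)\,ds$, whose modulus is at most $\int\operatorname{osc}_{[\mu-|s|,\,\mu+|s|]}(\sigma)\,\phi_a(s)\,ds$; inserting the oscillation bound from Step 1 and using the moment identities $\int|s|^k\phi_a(s)\,ds=O(a^k)$ makes this $\le c\big(aM_0(1+|\mu|)^{m-1}+M_a(1+|\mu|)^{\kappa}\big)+c_a''(1+|\mu|)^{m-2}$. Adding the two estimates, and using $(1+|\mu|)^{\kappa}\le(1+|\mu|)^{\kappa+1}$ (with $\kappa\le m-1$ keeping the orders of the error terms compatible), yields the asserted inequality.

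\textbf{Main obstacle.} The crux --- and the step I expect to take the most care --- is Step 1: producing a pointwise estimate out of a smoothed one whose remainder has leading coefficient \emph{linear in $a$}. This linearity is the entire point, since in the applications $\ep$ is fixed first and then $a$ is chosen small enough that $aM_0<\ep$. Monotonicity of $\alpha$ is used exactly here: it is what makes convolution with the positive bump $\phi_a$ ``see'' the full mass of $d\alpha$ in a length-$a$ window, and there is no analogous device available for $\beta$ alone. The remaining technicalities --- convergence of the tail integrals against the Schwartz function $\phi_a$, the bookkeeping of constants allowed to depend on $a$, the initialization constant $(\sigma*\phi_a)(\mu_0)$, and the bounded-$\mu$ range --- are routine, the last two being trivial in the applications.
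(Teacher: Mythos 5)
Your argument is correct and is essentially the proof the paper relies on: the paper establishes this lemma only by citing H\"ormander's Lemma 17.5.6 (vol.\ III), and your two steps --- the oscillation bound for $\alpha$ extracted from positivity of $d\alpha$ against the positive bump $\phi_a$ (with $(d\beta)*\phi_a$ controlled by summing hypothesis (a) over length-$a$ windows against the rapid decay of $\phi$), followed by integration of (b) for the smoothed difference and de-smoothing via the oscillation estimate and the moments of $\phi_a$ --- reproduce exactly that argument, with the crucial linear-in-$a$ leading coefficient tracked correctly. The only (harmless) imprecision is that for $\kappa<-1$ the integral $\int_{\mu_0}^{\mu}(1+s)^{\kappa}\,ds$ is bounded by a constant rather than by $(1+|\mu|)^{\kappa+1}$, but, as you note, such constants are absorbed into the lower-order terms in the range relevant here ($m=n\ge 2$, $\kappa=-2$).
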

\begin{proof}
The proof is identical to argument for Lemma 17.5.6 in \cite[Volume 3]{Hor 1-4}. 
\end{proof}

We will also need the following result. 
\begin{Lem}[\cite{Hor} Tauberian Theorem for non-monotone functions]\label{L: FTT 2}
Let $g$ be a piecewise continuous function such that there exists $a>0$ with $\hat{g}(t)\equiv 0$ for $\abs{t}\leq a.$ Suppose further that for all $\mu \in \R$ there exist constants $m\in \mathbb N$ and $c_1,c_2>0$ so that 
\begin{equation}
\abs{g(\mu+s)-g(\mu)}\leq c_1\lr{1+\abs{\mu}}^m + c_2\lr{1+\abs{\mu}}^{m-1}\qquad \forall s\in [0,1].\label{E:g cond}
\end{equation}
Then, there exists a positive constant $c_{m,a}$, depending only on $m$ and $a$, such that for all $\mu$
\[\abs{g(\mu)}\leq c_{m,a}\lr{c_1\lr{1+\abs{\mu}}^m+c_2\lr{1+\abs{\mu}}^{m-1}}.\]
\end{Lem}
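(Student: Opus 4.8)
The plan is to exploit the spectral gap $\widehat g\equiv 0$ on $[-a,a]$ by convolving $g$ against a band-limited mollifier, which kills $g$ entirely, and then to bootstrap the one-step increment bound \eqref{E:g cond} to a bound valid on all scales. First I would fix a function $\psi\in\mathcal S(\R)$ whose Fourier transform is supported in $[-a,a]$ with $\widehat\psi(0)=1$; equivalently $\int_\R\psi=1$. Such a $\psi$ exists (take the inverse Fourier transform of a smooth bump supported in $(-a,a)$ that equals $1$ near the origin), and the choice depends only on $a$. Applying \eqref{E:g cond} with $\mu$ ranging over the integers shows that $g$ has at most polynomial growth, so it is a tempered distribution and $g*\psi$ is given by the absolutely convergent integral $\int_\R g(\mu-s)\psi(s)\,ds$. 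Since $\widehat{g*\psi}=\widehat g\,\widehat\psi$ and $\widehat\psi$ is supported in the set $[-a,a]$ where $\widehat g$ vanishes, we get $g*\psi\equiv 0$, hence for every $\mu$
\[
g(\mu)=g(\mu)\int_\R\psi(s)\,ds-(g*\psi)(\mu)=\int_\R\bigl(g(\mu)-g(\mu-s)\bigr)\,\psi(s)\,ds.
\]

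Next I would upgrade \eqref{E:g cond} from unit shifts to arbitrary shifts. For $s\ge 0$ write $s=k+r$ with $k=\lfloor s\rfloor$ and $r\in[0,1)$, and telescope $g(\mu)-g(\mu-s)$ over the $k+1$ subintervals of length $\le 1$; each increment is controlled by \eqref{E:g cond} at a point within distance $k+1\le\abs{s}+1$ of $\mu$, and using $(1+\abs{\mu}+\abs{s})\le 2(1+\abs{\mu})(1+\abs{s})$ one obtains
\[
\abs{g(\mu)-g(\mu-s)}\le C_m\,(1+\abs{s})^{m+1}\Bigl(c_1(1+\abs{\mu})^m+c_2(1+\abs{\mu})^{m-1}\Bigr)
\]
for all $s\in\R$ (the case $s<0$ being symmetric), with $C_m$ a numerical constant depending only on $m$. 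Plugging this into the identity above and using that $\psi$ is Schwartz, so that $\int_\R(1+\abs{s})^{m+1}\abs{\psi(s)}\,ds<\infty$, yields
\[
\abs{g(\mu)}\le c_{m,a}\Bigl(c_1(1+\abs{\mu})^m+c_2(1+\abs{\mu})^{m-1}\Bigr),\qquad c_{m,a}:=C_m\int_\R(1+\abs{s})^{m+1}\abs{\psi(s)}\,ds,
\]
which is the claimed estimate, since $c_{m,a}$ depends only on $m$ and, through $\psi$, on $a$.

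The argument is essentially routine, and I do not expect a genuine obstacle; the only points that need a little care are verifying that $g$ is tempered so that the Fourier-transform manipulation in the first step is legitimate (immediate from \eqref{E:g cond} applied along the integers), and carrying out the telescoping in the second step so that the final constant is uniform in $\mu$ and independent of $c_1$ and $c_2$.
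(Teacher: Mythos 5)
Your proof is correct and is essentially the standard argument behind the cited result: the paper itself gives no proof of Lemma \ref{L: FTT 2}, deferring to \cite{Hor}, and your combination of a band-limited mollifier $\psi$ (so that $g*\psi\equiv 0$ and $g(\mu)=\int(g(\mu)-g(\mu-s))\psi(s)\,ds$) with the telescoped unit-increment bound is exactly how the reference (and, e.g., Levitin's appendix in \cite{SV}) establishes it. The only point worth making explicit is that local boundedness of $g$ (from piecewise continuity) together with \eqref{E:g cond} gives the polynomial growth needed to treat $g$ as a tempered distribution, and that the final constant involves only differences of $g$, so it depends only on $m$ and (through $\psi$) on $a$, as you note.
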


\subsection{Proof of Proposition \ref{P:Smoothed Est 2}}\label{S: Proof of Prop Est 2}
As explained above, the proof of Proposition \ref{P:Smoothed Est 2} reduces to establishing relations \eqref{E:NTS1} and \eqref{E:NTS2}.
\subsubsection {Proof of relation \eqref{E:NTS1}}
 We seek to apply Lemma \ref{L: FTT 1} to $\alpha_\ep$ and $\beta_\ep$. Let $a=\ep$, $m=n$ and $\kappa=-2.$ We first verify condition $(a)$. From Remark \ref{remark R_Q} it follows that there exist an open neighborhood $\mathcal U_\ep$ of $x_0$ and  constants $c_1, c_\ep>0$ so that for all $x,y \in  \mathcal U_\ep$ and all $\leb \geq 1$
\begin{align}
\notag \int_{\leb-\ep}^{\leb+\ep} (|\partial_\nu\,E(x,x, \nu)|+|\partial_\nu\,(C_\ep E C_\ep^*)(y,y, \nu)|) \,d\nu &= \sum_{|\leb_j-\leb|\leq\ep}(\varphi_j(x))^2 + (C_\ep\varphi_j(y))^2\\
\label{cond a for C (1)}&\leq c_1 \ep \leb^{n-1}  +c_\ep \leb^{n-2}.
\end{align}
Combining \eqref{cond a for C (1)} with the estimate in Proposition \ref{P:Smoothed Estimate} applied to $Q=C_\ep$, we see that there exist positive constants $M_0$ and $c_\ep$ for which 
\begin{align*}
\sup_{x,y\in \mathcal U_\ep}\int_{\leb-\ep}^{\leb+\ep}\abs{\partial_\nu \beta_\ep(x,y,\nu)}\, d\nu
&\leq {M}_0\ep \leb^{n-1}+c_\ep \leb^{n-2}
\end{align*}
for all  $\leb \geq 1$. It remains to verify condition $(b)$. Note that
$$\partial_\leb \Big(\alpha_\ep(x,y,\cdot)-\beta_\ep(x,y,\cdot)\Big)*\phi_\ep\,(\leb)=\mathcal F_{t\gives \leb}^{-1}\lr{(1-\hat{\rho}(t))\,\hat{\phi}_\ep(t)(U(t)C_\ep^*)(x,y)}(\leb),$$
where $\mathcal F$ is the Fourier transform and $\phi_\ep$ is defined in \eqref{E:Phia Def}. According to Lemma \ref{P:Microlocal Identity}, $U(t)C_\ep^*$ is a smoothing operator for $\tfrac{1}{2}\inj(M,g)<\abs{t}<\frac{1}{\ep}$. 
Hence, since $\supp \hat \phi_\ep  \subset \{t:\, |t|< \frac{1}{\ep} \}$ and  $\supp (1-\hat \rho)  \subset \{t:\, |t|> \tfrac{1}{2} \inj(M,g)\}$,  we find that for each $N$ there are constants $c_{N,\ep}$ depending on $N,\ep$ satisfying
\[\sup_{x,y\in M}\abs{\partial_\leb\lr{\alpha_\ep(x,y,\cdot)-\beta_\ep(x,y,\cdot)\big.}*\phi_\ep\,(\leb)}\leq c_{N,\ep}\lr{1+\abs{\leb}}^{-N}\]
for all $\leb >0$.\\

\subsubsection{ Proof of relation \eqref{E:NTS2}}
 We seek to apply Lemma \ref{L: FTT 2} to $g_\ep.$ First, note that since $g_\ep(x,y,\leb)=EB_\ep^*(x,y,\leb)-\rho * EB_\ep^*(x,y,\leb)$, the function $g_\ep(x,y, \cdot)$ is piecewise continuous in the $\leb$ variable. Next, we check that $\hat{g_\ep}(t)\equiv 0$ in a neighborhood of $t=0.$ We have
$$\partial_\leb g_\ep(x,y,\leb)=\mathcal F_{t\gives \leb}^{-1}\lr{(1-\hat{\rho}(t)) (U(t)B_\ep^*)(x,y)}(\leb).$$
Since $\hat \rho\equiv 1$ on $(-\tfrac{1}{2}\inj(M,g), \tfrac{1}{2}\inj(M,g) )$, it follows that  $\mathcal F_{\leb \gives t}(\partial_\leb g_\ep(x,y,\cdot)) (t) =0$ for $|t| \leq \tfrac{1}{2}\inj(M,g) $. Equivalently,
$$t \cdot \mathcal F_{\leb \gives t}(g_\ep(x,y,\cdot)) (t) =0 \qquad |t| \leq \tfrac{1}{2}\inj(M,g).$$
In addition, we must have $\mathcal F_{\leb \gives t}(g_\ep(x,y,\cdot)) (0) = 0$ for otherwise $g_\ep(x,y, \cdot)$ would include a sum of derivatives of delta functions but this is not possible since $g_\ep(x,y,\cdot)$ is piece-wise continuous. It follows that 
$$ \mathcal F_{\leb \gives t}(g_\ep(x,y,\cdot)) (t) =0 \qquad |t| \leq \tfrac{1}{2}\inj(M,g),$$
as desired. It therefore remains to check that $g_\ep$ satisfies \eqref{E:g cond}. Let $s \in [0,1]$, $\leb \in \R$, and write
\begin{align}\label{decompose g}
g_\ep(x,y,\leb+s)-g_\ep(x,y,\leb)=& EB_\ep^*(x,y, \leb+s)-EB_\ep^*(x,y, \leb) \notag\\
&+\rho*EB_\ep^*(x,y, \leb+s)-\rho*EB_\ep^*(x,y, \leb).
\end{align}
To estimate $EB_\ep^*(x,y, \leb+s)-EB_\ep^*(x,y, \leb)$ we apply the Cauchy Schwartz inequality,
\begin{align*}
EB_\ep^*(x,y, \leb+s)-EB_\ep^*(x,y, \leb)
&= \sum_{\leb \leq \leb_j \leq \leb +s} \varphi_j(x)\, B_\ep \varphi_j(y)\\
&\leq  \Big( \sum_{\leb \leq \leb_j \leq \leb +s} (\varphi_j(x))^2\Big)^{\frac{1}{2}} \Big( \sum_{\leb \leq \leb_j \leq \leb +s} (B_\ep\varphi_j(y))^2\Big)^{\frac{1}{2}}
\end{align*}

Applying Remark \ref{remark R_Q} to $Q=Id$ and $Q=B_\ep$, there exist $\mathcal U_\ep$ open neighborhood of $x_0$ and constants $c, c_{\ep}>0$ making 
\begin{equation}\label{decompose g 1}
|EB_\ep^*(x,y, \leb+s)-EB_\ep^*(x,y, \leb)| \leq c \, \ep\, \leb^{n-1} + c_{ \ep} \leb^{n-2}
\end{equation}
for all $\leb \geq 1$,  $s \in [0,1]$, and  $x,y \in  \mathcal U_\ep $. The $\ep$ factor is due to the fact that $\|b_0\|_1<\ep$.  

To estimate $\rho*EB_\ep^*(x,y, \leb+s)-\rho*EB_\ep^*(x,y, \leb)$ we apply Proposition \ref{P:Smoothed Estimate} to the operator $Q=B_\ep$. Since there exists $\tilde c>0$ with 
$$|\partial_\leb \rho*EB_\ep^*(x,y, \leb)| \leq \tilde c \,  (\|b_0\|_1 \leb^{n-1} + \leb^{n-2}) \qquad \forall \leb \geq 1$$ and
 $\|b_0\|_1 \leq \ep$,
we get (after possibly enlarging $c$ and $c_{\ep}$) that
\begin{equation}\label{decompose g 2}
|\rho*EB_\ep^*(x,y, \leb+s)-\rho*EB_\ep^*(x,y, \leb)| \leq c\, \ep\, \leb^{n-1} + c_{\ep} \leb^{n-2} \qquad  \forall \leb \geq 1.
\end{equation}

Combining \eqref{decompose g 1} and \eqref{decompose g 2} into \eqref{decompose g} we conclude 
the existence of positive constants $c$ and $c_{\ep}$ so that
$$|g_\ep(x,y,\leb+s)-g_\ep(x,y,\leb)| \leq c\, \ep\, \leb^{n-1} + c_{\ep} \leb^{n-2}$$
for all $\leb \geq 1$ and $s \in [0,1]$ as desired.
Applying Lemma \ref{L: FTT 2} with $m=n$, $a=\tfrac{1}{2}\inj(M,g)$ proves \eqref{E:NTS2} .

\section{Proof of Theorems \ref{T:LWL aperiodic} -  \ref{T:Dist Function}}\label{S:Applications}
\subsection{Proof of Theorem \ref{T:LWL aperiodic}}\label{S:Aperiodic Pf}
Suppose that $(M,g)$ is a smooth, compact, Riemannian manifold, with no boundary. Let $K\subseteq M\x M$ be a compact set satisfying that any pair of points in it are mutually non-focal. We aim to show that there exists $c>0$ so that for every $\ep>0$ there are constants $\leb_\ep>0$ and $c_\ep>0$ so that 
$$\sup_{\lr{x,y} \in K} |R(x,y, \leb)| \leq c\, \ep  \leb^{n-1}+ c_\ep \leb^{n-2}$$
for all $\leb>\leb_\ep$. Fix $\ep>0$ and write $\Delta \subseteq M\x M$ for the diagonal. Define 
\[\twiddle{K}=K\cap \Delta.\] 
By Equation \eqref{E:Main Goal bis}, there exists $\leb_\ep>0,$ a finite collection $\set{x_j, ~j=1,\ldots, N_\ep},$ and open neighborhoods $\mathcal U_\ep^{x_j}$ of $x_j$ so that 
\[\twiddle{K}\subseteq \bigcup_j \mathcal U_\ep^{x_j}\x \mathcal U_\ep^{x_j}\] 
and
\begin{equation}\label{E: Safarov complement}
\sup_{x,y \in \mathcal U_{\ep}^{x_j} } |R(x,y, \leb)| \leq  c\, \ep \leb^{n-1}+ c_\ep \leb^{n-2}
\end{equation} 
for all $\leb> \leb_\ep.$ Define
$$K_{\ep}:=K ~~\backslash ~~\Union_j \mathcal U_\ep^{x_j}\x \mathcal U_\ep^{x_j}.$$
Safarov proved in \cite[Theorem 3.3]{Saf}, under the mutually non-focal assumption,  that
\begin{equation}\label{E: Safarov}
\sup_{(x,y) \in K_\ep} |R(x,y, \leb)| = o_\ep( \leb^{n-1}).
\end{equation}
Combining \eqref{E: Safarov complement} and \eqref{E: Safarov} completes the proof.
\qed

\subsection{Proof of Theorem \ref{Embedding}}\label{S:Pf Embedding}The injectivity of the maps $\Psi_{_{(\leb, \leb+1]}}:M \to \R^{m_\leb}$ for $\leb$ large enough is implied by the existence of positive constants $c_1,c_2, r_0$ and $\leb_{r_0}$ so that if $\leb>\leb_{r_0},$ then 
\begin{equation}
 \inf_{x,y:\,\leb \dist(x,y)\geq r_0}\Dist_\leb^2(x,y)>c_1\label{E:NTS Embedding 2}
\end{equation}
and
\begin{equation}
\inf_{x,y:\,\leb \dist(x,y)<r_0}\frac{\Dist_\leb^2(x,y)}{\leb^2 \dist(x,y)^2}>c_2.\label{E:NTS Embedding 1}
\end{equation}
We first prove \eqref{E:NTS Embedding 2}. By Theorem \ref{T:LWL aperiodic}, for all $x,y \in M,$
\begin{align}
\Dist_\leb^2(x,y)= f(\leb \dist(x,y)) ~~+~~ \twiddle{R}(x,y,\leb),\label{E:Embedding Est}
\end{align}
where $\sup_{x,y \in M} |\twiddle R(x,y,\leb)|=o(1)$ and $f:[0,+\infty)\to \R$ is the function
\[f(r):=\int_{S^{n-1}}1-e^{ir\w_1}d\w.\]
Observe that $f(r)\geq 0$ with $f(r)=0$ only if $r=0.$ Moreover, 
\begin{equation}
f(r)=\sigma_n+O(r^{-\frac{n-1}{2}}) \;\; \text{as}\; \; r \to \infty \qquad \text{and}\qquad f(r)=r^2\cdot \twiddle{f}(r)\label{E:f rlns}
\end{equation}
for some smooth and positive function $\twiddle{f}$, where $\sigma_n$ is the volume of $S^{n-1}$. According to the first relation in \eqref{E:f rlns}, we may choose $r_0>0$ so that 
\begin{equation}\label{E:Embedding Est 2}
\text{if}\quad \leb \dist(x,y) \geq r_0 \quad \text{then}\quad \abs{f(\leb \dist(x,y))- \sigma_n}\leq \frac{\sigma_n}{4}.
\end{equation}
Moreover, by Theorem \ref{T:LWL aperiodic} we may choose $\leb_{r_0}$ so that if $\leb>\leb_{r_0},$ then
\begin{equation}\label{E:Embedding Est 3}
\sup_{x,y \in M} \abs{\twiddle R(x,y, \leb)}\leq \frac{\sigma_n}{4}.
\end{equation}
Combining \eqref{E:Embedding Est} , \eqref{E:Embedding Est 2}  and \eqref{E:Embedding Est 3}, we proved that for all $\leb> \leb_{r_0}$ and all $x,y \in M$ with $\leb \dist(x,y) \geq r_0$
$$\Dist^2_\leb(x,y) \geq \frac{\sigma_n}{2},$$ 
as desired. To verify \eqref{E:NTS Embedding 1}, write as above
\[\Dist^2_\leb(x,y)=\frac{(2\pi)^n}{2 \leb^{n-1}} (E_{_{(\leb,\leb+1]}}(x,x)+E_{_{(\leb,\leb+1]}}(y,y)-2E_{_{(\leb,\leb+1]}}(x,y)),\]
and note that the first derivatives of $\Dist_\leb^2(x,y)$ in $x$ and $y$ all vanish when $x=y.$ Moreover, by \cite[Proposition 2.3]{Zel2}, we have that the Hessian of $E_{_{(\leb,\leb+1]}}$ may be written as 
\[d_x\otimes d_y\big|_{x=y} E_{_{(\leb, \leb+1]}}(x,y)=C_n \leb^{n+1}  g_x +o(\leb^{n+1}),\]
where $g_x$ is the metric $g$ on $T_xM$ and Equation (1.2) in \cite{Pot} shows that
\[C_n=\frac{\sigma_n}{n(2\pi)^n}.\]
Therefore, applying Taylor's Theorem, we have that there exists ${C_0}>0$ for which
\begin{equation}
\abs{\frac{\Dist_\leb^2(x,y)}{\leb^2 \dist^2(x,y)}- \frac{\sigma_n}{2n}}\leq {C_0}\cdot \leb \dist(x,y).\label{E:Very Near-Diag}
\end{equation}
The extra factor of $\leb$ on the right hand side of \eqref{E:Very Near-Diag} comes from the fact that 
\[\sup_{\abs{\alpha}=3} \abs{\partial_x^{\alpha}|_{x=y} E_{_{(\leb,\leb+1]}}(x,y)}=O(m_\leb \, \leb^3),\]
which is proved for example in \cite[Equation (2.7)]{Bin}. Equation \eqref{E:Very Near-Diag} shows that
\begin{equation*}
\inf_{\leb \dist(x,y)<\frac{\sigma_n}{4 n{C_0}}}\frac{\Dist_\leb^2(x,y)}{\leb^2 d^2_g(x,y)}\geq \frac{\sigma_n}{2n}>0.
\end{equation*}
If $r_0 \leq \frac{\sigma_n}{4 n{C_0}}$, then the claim \eqref{E:NTS Embedding 1} follows. Otherwise, it remains to show that there exists $c_2>0$ with
\begin{equation}
\inf_{\frac{\sigma_n}{4n C_0} \leq \leb \dist(x,y)<r_0}\frac{\Dist_\leb^2(x,y)}{\leb^2 d^2_g(x,y)}>c_2
\end{equation}
for all $\leb$ sufficiently large. Theorem \ref{T:LWL aperiodic} shows that, after possibly enlarging $\leb_{r_0},$ we have
\[\sup_{x,y \in M}\abs{\twiddle{R}(x,y,\leb)}\leq \Big(\frac{\sigma_n}{4 n C_0}\Big)^2\inf_{r<r_0}\tilde f(r)\]
for all $\leb>\leb_{r_0}$. Then, the second relation in \eqref{E:f rlns} combined with \eqref{E:Embedding Est} yields that for all $\leb>\leb_{r_0}$
\[\inf_{\frac{\sigma_n}{4n C_0} \leq \leb \dist(x,y)< r_0}\Dist_\leb^2(x,y)\geq \lr{\frac{\sigma_n}{4nC_0}}^2\inf_{r<r_0} \tilde f(r)>0.\]
This completes the proof of \eqref{E:NTS Embedding 1}. 

\subsection{Proof of Theorem \ref{T:Dist Function}}\label{S:Dist Function Pf}
By \eqref{E:dist def} and Theorem \ref{T:LWL aperiodic} we have that 
\[\sup_{x,y\in M}\abs{\Dist_\leb^2(x,y)-\int_{S^{n-1}}\lr{1-e^{i\leb \dist(x,y)\w_1}}d\w}=o\lr{1}\]
as $\leb \gives \infty.$ Combing this with
\[\frac{1}{\leb^2\dist(x,y)^2}\int_{S^{n-1}}\lr{1-e^{i\leb \dist(x,y)\w_1}}d\w=\frac{\sigma_n}{2n}+O(\leb^2 \dist^2(x,y)).\]
and with Equation \eqref{E:Very Near-Diag} completes the proof. 
\qed

\end{document}